\newtheorem{thm}{Theorem}[section]
\newtheorem{prop}[thm]{Proposition}
\newtheorem{cor}[thm]{Corollary}
\newtheorem{lem}[thm]{Lemma}
\newtheorem{conj}[thm]{Conjecture}
\newtheorem{exa}[thm]{Example}
\newtheorem{defn}[thm]{Definition}
\newcommand{\pro}{\partial}
\newcommand{\da}{\downarrow}
\newcommand{\bdu}{\hs{3pt}\hat{\cup}\hs{3pt}}
\newcommand{\ben}{\begin{enumerate}}
\newcommand{\een}{\end{enumerate}}
\newcommand{\ble}{\begin{lem}}
\newcommand{\ele}{\end{lem}}
\newcommand{\bth}{\begin{thm}}
\renewcommand{\eth}{\end{thm}}
\newcommand{\bpr}{\begin{prop}}
\newcommand{\epr}{\end{prop}}
\newcommand{\bco}{\begin{cor}}
\newcommand{\eco}{\end{cor}}
\newcommand{\bcon}{\begin{conj}}
\newcommand{\econ}{\end{conj}}
\newcommand{\bde}{\begin{defn}}
\newcommand{\ede}{\end{defn}}
\newcommand{\bex}{\begin{exa}}
\newcommand{\eex}{\end{exa}}
\newcommand{\barr}{\begin{array}}
\newcommand{\earr}{\end{array}}
\newcommand{\btab}{\begin{tabular}}
\newcommand{\etab}{\end{tabular}}
\newcommand{\beq}{\begin{equation}}
\newcommand{\eeq}{\end{equation}}
\newcommand{\bea}{\begin{eqnarray*}}
\newcommand{\eea}{\end{eqnarray*}}
\newcommand{\bal}{\begin{align*}}
\newcommand{\bce}{\begin{center}}
\newcommand{\ece}{\end{center}}
\newcommand{\bpi}{\begin{picture}}
\newcommand{\epi}{\end{picture}}
\newcommand{\bpp}{\begin{picture}}
\newcommand{\epp}{\end{picture}}
\newcommand{\bfi}{\begin{figure} \begin{center}}
\newcommand{\efi}{\end{center} \end{figure}}
\newcommand{\bprf}{\begin{proof}}
\newcommand{\eprf}{\end{proof}\medskip}
\newcommand{\bsl}{\begin{slide}{}}
\newcommand{\esl}{\end{slide}}
\newcommand{\bfr}{\begin{frame}}
\newcommand{\efr}{\end{frame}}
\newcommand{\hqed}{\hfill \qed}
\newcommand{\eqqed}[1]{$\rule{1ex}{0ex}\hfill{\dil#1}\hfill\qed$}
\newcommand{\hs}[1]{\hspace{#1}}
\newcommand{\hso}[1]{\hspace{-1pt}}
\newcommand{\vs}[1]{\vspace{#1}}
\newcommand{\qmq}[1]{\quad\mbox{#1}\quad}
\newcommand{\sbe}{\subseteq}
\newcommand{\zh}{\hat{0}}
\newcommand{\oh}{\hat{1}}
\newcommand{\lt}{\lhd}
\newcommand{\gt}{\rhd}
\newcommand{\case}[4]{\left\{\barr{ll}#1&\mbox{#2}\\#3&\mbox{#4}\earr\right.}
\def\<{\langle}
\def\>{\rangle}
\newcommand{\spn}[1]{\langle{#1}\rangle}
\newcommand{\ra}{\rightarrow}
\newcommand{\om}{\omega}
\newcommand{\bbN}{{\mathbb N}}
\newcommand{\bbP}{{\mathbb P}}
\newcommand{\cJ}{{\cal J}}
\newcommand{\cL}{{\cal L}}
\newcommand{\cM}{{\cal M}}
\newcommand{\cN}{{\cal N}}
\newcommand{\cO}{{\cal O}}
\newcommand{\cP}{{\cal P}}
\DeclareMathOperator{\lcm}{lcm}
\DeclareMathOperator{\rk}{rk}
\DeclareMathOperator{\st}{st}
\newcommand{\dil}{\displaystyle}
\author{Jamie Kimble
\and
Bruce E. Sagan
\and
Avery St.\ Dizier
}
\title{$K$-promotion on $m$-packed labelings of posets
}
\affiliation{Department of Mathematics, Michigan State University, East Lansing, MI, USA}
\keywords{ comb, $K$-promotion, $m$-packed labeling, orbit, partially ordered set, rooted tree, star}
\begin{document}

\publicationdata{vol. 28:2}{2026}{16}{10.46298/dmtcs.16351}{2025-08-14; 2025-08-14; 2026-01-07; 2026-02-18; 2026-02-23}{2026-02-24}

\maketitle

\begin{abstract}
\vspace{10pt}
Sch\"utzenberger's promotion operator $\pro$ is a fundamental map in dynamical algebraic combinatorics.  At first, its action was mainly considered on standard Young tableaux.
But $\pro$ was subsequently shown to have interesting properties when applied to natural labelings of other posets.  Pechenik defined a $K$-theoretic version of promotion,  $\pro_K$, on $m$-packed labelings of tableaux.  The operator  $\pro_K$ was then extended  to increasing labelings of other posets. The purpose of the current work is to show that the original action of $\pro_K$ on $m$-packed labelings yields interesting results when applied to  partially ordered sets in general, and to rooted trees in particular.   We show that under certain conditions, the sizes of the orbits and order of $\pro_K$ exhibit nice divisibility properties.  We also completely determine, for certain values of $m$, the orbit sizes for the action on various types of  rooted trees such as extended stars, combs, zippers, and a type of three-leaved tree.
\end{abstract}

%
%
%
%
%



\section{Introduction}

We let $\bbN$ and $\bbP$ be the nonnegative and positive integers, respectively.
For  $m,n\in\bbN$, we let
$$
[m,n]=\{m, m+1,\ldots,n\} \qmq{and} [n]=[1,n].
$$
All of our sets $S$  will be finite and we will denote by $\#S$ or $|S|$ the cardinality of $S$.

\begin{figure}
    \centering
\begin{tikzpicture}
\draw(-1.5,1) node{$L=$};
\fill(1,0) circle(.1);
\fill(0,1) circle(.1);
\fill(2,1) circle(.1);
\fill(0,2) circle(.1);
\fill(2,2) circle(.1);
\draw (1,0)--(0,1)--(0,2)--(2,1)--(2,2)--(0,1) (1,0)--(2,1);
\draw(1,-.5) node{$1$};
\draw(-.5,1) node{$2$};
\draw(2.5,1) node{$3$};
\draw(-.5,2) node{$4$};
\draw(2.5,2) node{$5$};
\end{tikzpicture}

\vs{10pt}

\begin{tikzpicture}
\fill(1,0) circle(.1);
\fill(0,1) circle(.1);
\fill(2,1) circle(.1);
\fill(0,2) circle(.1);
\fill(2,2) circle(.1);
\draw (1,0)--(0,1)--(0,2)--(2,1)--(2,2)--(0,1) (1,0)--(2,1);
\draw(-.5,1) node{$2$};
\draw(2.5,1) node{$3$};
\draw(-.5,2) node{$4$};
\draw(2.5,2) node{$5$};
\draw(3.5,1) node{$\mapsto$};
\begin{scope}[shift={(4.5,0)}]
\fill(1,0) circle(.1);
\fill(0,1) circle(.1);
\fill(2,1) circle(.1);
\fill(0,2) circle(.1);
\fill(2,2) circle(.1);
\draw (1,0)--(0,1)--(0,2)--(2,1)--(2,2)--(0,1) (1,0)--(2,1);
\draw(1,-.5) node{$2$};
\draw(2.5,1) node{$3$};
\draw(-.5,2) node{$4$};
\draw(2.5,2) node{$5$};
\draw(3.5,1) node{$\mapsto$};
\end{scope}
\begin{scope}[shift={(9,0)}]
\fill(1,0) circle(.1);
\fill(0,1) circle(.1);
\fill(2,1) circle(.1);
\fill(0,2) circle(.1);
\fill(2,2) circle(.1);
\draw (1,0)--(0,1)--(0,2)--(2,1)--(2,2)--(0,1) (1,0)--(2,1);
\draw(1,-.5) node{$2$};
\draw(-.5,1) node{$4$};
\draw(2.5,1) node{$3$};
\draw(2.5,2) node{$5$};
\end{scope}
\end{tikzpicture}

\vs{10pt}

\begin{tikzpicture}
\draw(-1.5,1) node{$\pro L=$};
\fill(1,0) circle(.1);
\fill(0,1) circle(.1);
\fill(2,1) circle(.1);
\fill(0,2) circle(.1);
\fill(2,2) circle(.1);
\draw (1,0)--(0,1)--(0,2)--(2,1)--(2,2)--(0,1) (1,0)--(2,1);
\draw(1,-.5) node{$1$};
\draw(-.5,1) node{$3$};
\draw(2.5,1) node{$2$};
\draw(-.5,2) node{$5$};
\draw(2.5,2) node{$4$};
\end{tikzpicture}
    
    \caption{The promotion operator $\pro$}
    \label{ProFig}
\end{figure}

Let us first recall Sch\"utzenberger's~\cite{sch:pme} promotion operator, $\pro$, on naturally labeled posets.  It was inspired by the work of Robinson~\cite{rob:rsg} and Knuth~\cite{knu:pmg} on Young tableaux.
Let $(P,\le)$ be a partially ordered set (poset) where we write $\le_P$ for $\le$ if we need to be more precise.  We will use the notation $x\lt y$ if $x$ is covered by $y$ in $P$.
For any terms from the theory of posets not defined here, see the texts of Sagan~\cite{sag:aoc} or Stanley~\cite{sta:ec1}.  If $\#P=n$ then a {\em natural labeling} of $P$ is a bijection $L:P\ra[n]$ such that
\beq
\label{NatL}
x<_P y \qmq{implies} L(x)<L(y).
\eeq
We let $\cN(P)$ be the set of natural labelings of $P$.
The top and bottom posets in Figure~\ref{ProFig} are naturally labeled.  Now, if $z\in P$ then one can have a {\em gapped natural labeling} of $P$ which is a bijection $L:P-\{z\}\ra[2,n]$ which satisfies~\eqref{NatL} for all elements in the domain of $L$.  The element $z$ is considered {\em unlabeled}.  All the labelings in the middle row of Figure~\ref{ProFig} are of this form.

Given a poset $P$ with $\#P=n$, the {\em promotion operator} is $\pro:\cN(P)\ra\cN(P)$ defined as follows.  Suppose we are given $L\in\cN(P)$.  Remove the $1$ from $L$ and let $z\in P$ be the element which is now unlabeled.  Of all the elements which cover $z$, let $w$ be the one of minimum label.  Form a new gapped natural labeling of $P$ by letting $L(z)=L(w)$ and removing the label from $w$.  Iterate this process until the unlabeled element is maximal in $P$.  Now decrease the labels of all labeled elements by one and label the unlabeled maximal element as $n$ to form $\pro L$.  An example is worked out in Figure~\ref{ProFig}.
This is one of the fundamental operations in the area which has come to be known as dynamical algebraic combinatorics.  See the survey articles of Roby~\cite{rob:dac} or Striker~\cite{str:dac} for more information.

An {\em increasing labeling} of a poset $P$ is a function $f:P\ra\bbP$ satisfying restriction~\eqref{NatL}.  Furthermore, we say that $f$ is {\em $m$-packed} if it is increasing and the image of $f$ is $[m]$ for some $m\in\bbP$.
Note that this forces $m\le |P|$.  And if $m=|P|$ then $L$ is a natural labeling.  
We let
$$
\cL_m(P) = \{L : \text{$L$ is an $m$-packed labeling of $P$}\}.
$$
  The top and bottom labelings in Figure~\ref{ProKFig} are $4$-packed.  Now let $A$ be an antichain of $P$. A {\em gapped $m$-packed labeling} of $P$ is a surjection $L:P- A\ra[2,m]$ satisfying~\eqref{NatL} on $P- A$.  The labelings in the middle row of Figure~\ref{ProKFig} are of this type.

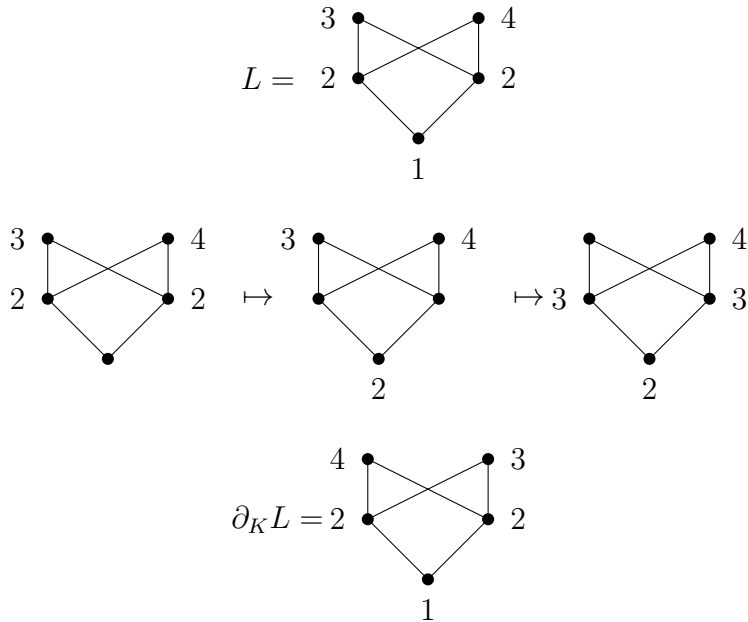
\begin{figure}
    \centering
\begin{tikzpicture}[scale=.8]
\draw(-1.5,1) node{$L=$};
\fill(1,0) circle(.1);
\fill(0,1) circle(.1);
\fill(2,1) circle(.1);
\fill(0,2) circle(.1);
\fill(2,2) circle(.1);
\draw (1,0)--(0,1)--(0,2)--(2,1)--(2,2)--(0,1) (1,0)--(2,1);
\draw(1,-.5) node{$1$};
\draw(-.5,1) node{$2$};
\draw(2.5,1) node{$2$};
\draw(-.5,2) node{$3$};
\draw(2.5,2) node{$4$};
\end{tikzpicture}

\vs{10pt}

\begin{tikzpicture}[scale=.8]
\fill(1,0) circle(.1);
\fill(0,1) circle(.1);
\fill(2,1) circle(.1);
\fill(0,2) circle(.1);
\fill(2,2) circle(.1);
\draw (1,0)--(0,1)--(0,2)--(2,1)--(2,2)--(0,1) (1,0)--(2,1);
\draw(-.5,1) node{$2$};
\draw(2.5,1) node{$2$};
\draw(-.5,2) node{$3$};
\draw(2.5,2) node{$4$};
\draw(3.5,1) node{$\mapsto$};
\begin{scope}[shift={(4.5,0)}]
\fill(1,0) circle(.1);
\fill(0,1) circle(.1);
\fill(2,1) circle(.1);
\fill(0,2) circle(.1);
\fill(2,2) circle(.1);
\draw (1,0)--(0,1)--(0,2)--(2,1)--(2,2)--(0,1) (1,0)--(2,1);
\draw(1,-.5) node{$2$};
\draw(-.5,2) node{$3$};
\draw(2.5,2) node{$4$};
\draw(3.5,1) node{$\mapsto$};
\end{scope}
\begin{scope}[shift={(9,0)}]
\fill(1,0) circle(.1);
\fill(0,1) circle(.1);
\fill(2,1) circle(.1);
\fill(0,2) circle(.1);
\fill(2,2) circle(.1);
\draw (1,0)--(0,1)--(0,2)--(2,1)--(2,2)--(0,1) (1,0)--(2,1);
\draw(1,-.5) node{$2$};
\draw(-.5,1) node{$3$};
\draw(2.5,1) node{$3$};
\draw(2.5,2) node{$4$};
\end{scope}
\end{tikzpicture}

\vs{10pt}

\begin{tikzpicture}[scale=.8]
\draw(-1.5,1) node{$\pro_K L=$};
\fill(1,0) circle(.1);
\fill(0,1) circle(.1);
\fill(2,1) circle(.1);
\fill(0,2) circle(.1);
\fill(2,2) circle(.1);
\draw (1,0)--(0,1)--(0,2)--(2,1)--(2,2)--(0,1) (1,0)--(2,1);
\draw(1,-.5) node{$1$};
\draw(-.5,1) node{$2$};
\draw(2.5,1) node{$2$};
\draw(-.5,2) node{$4$};
\draw(2.5,2) node{$3$};
\end{tikzpicture}
    
    \caption{The $K$-promotion operator $\pro_K$}
    \label{ProKFig}
\end{figure}

We now have everything in place to define, given $P$ and $m$, the 
{\em $K$-promotion function} $\pro_K:\cL_m(P)\ra\cL_m(P)$.  Consider an $m$-packed labeling $L:P\ra[m]$.
Since $L$ is increasing, the elements labeled $1$ form an antichain, $A$.  Remove the labels from $A$.  Now for all covers $x\lt y$ where  $x$ is unlabeled and $L(y)=2$, we label $x$ with $2$, remove the label from $y$, and leave all elements not in such a pair the same.  This is how we get from the first labeling to the second in row two of Figure~\ref{ProKFig}.  We now iterate the process, where the next step will consider covers $v\lt w$ with $v$ unlabeled and $L(w)=3$, and so forth.  
Termination occurs when the unlabeled elements are all maximal.
This completes the second row in Figure~\ref{ProKFig}. It is easy to prove by induction that at every stage the set of unlabeled elements forms an antichain.  Finally, we decrement the label of every labeled element by one and label the unlabeled maximal elements with $m$.  Again, a simple induction shows that the new labeling is $m$-packed and so $\pro_K(L)\in\cL_m(P)$, making $\pro_K$ a well-defined map.

Pechenik~\cite{pec:csi} first defined $\pro_K$, on certain 
$m$-packed Young tableaux. He was inspired by work of  Thomas and Yong~\cite{TY:jtt}.   Note that he called such tableaux ``increasing"  even though they also satisfied the condition on the image given above.
Dilks, Striker, and Vorland~\cite[Definition 3.3]{DSV:ril} generalized the definition of $\pro_K$ to all increasingly labeled posets. 
Such actions have been considered  in a number of other papers~\cite{BSV:ppb,BSV:ppp,BSV:ppq,DPS:rop,GPSS:ccs}.
The purpose of the present work is to show how keeping the surjective assumption  yields interesting results about $\pro_K$ both for posets in general and rooted tree posets in particular.  
In fact, it has been shown that to study the order of promotion of non-packed labelings, it suffices to look at packed labelings; see the papers of Pechenik and 
Mandel~\cite[Theorem 6.1]{MP:opp} for minuscule posets or Banaian, Barnard, Chepuri, and Striker~\cite{BBCS:ops} for posets in general.

 Note that orbits of $K$-promotion on $m$-packed labelings of any poset are a subset of those for its increasing labelings.  So, some of the results we will need for general posets follow from those in~\cite{DSV:ril}.  Pechenik~\cite{pec:map} also stated how the action of $\pro_K$ on increasing labelings of arbitrary posets is governed by the packed case.
 But we will give proofs  for $m$-packed labelings to keep this work self-contained.
\begin{prop}[{\cite[Lemma 3.9]{DSV:ril}}]
For any poset $P$ the map $\pro_K:\cL_m(P)\ra\cL_m(P)$  is a bijection.
\end{prop}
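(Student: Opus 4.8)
The plan is to prove that $\pro_K$ is injective and then invoke finiteness of $\cL_m(P)$: an injective self-map of a finite set is automatically a bijection. To establish injectivity I would construct a candidate reverse map $\ol{\pro}_K\colon\cL_m(P)\ra\cL_m(P)$, defined by running the definition of $\pro_K$ backwards, and show that $\ol{\pro}_K\circ\pro_K=\id$.

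First I would factor $\pro_K$ into two pieces: the \emph{sliding phase} $\Phi$, which deletes the labels on the antichain $A=L^{-1}(1)$ and slides the resulting hole upward through the stages $k=2,3,\dots$ until the unlabeled elements are all maximal, and the \emph{relabeling phase} $R$, which decrements every label by one and fills the terminal hole with $m$. The relabeling phase is transparently invertible: in $\pro_K L$ an element carries the label $m$ exactly when it lay in the terminal hole, since any element labeled $m$ must be maximal (a cover would force a label exceeding $m$) and the remaining labels were brought below $m$ by the decrement. Hence $R^{-1}$ deletes the labels on $(\pro_K L)^{-1}(m)$ and increments every remaining label by one, recovering the terminal configuration of $\Phi$. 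Everything thus reduces to showing that $\Phi$ is invertible.

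For this I would define a reverse sliding phase $\Psi$ that moves a maximal hole back down to a minimal one by processing the stages in the opposite order $k=m,m-1,\dots,2$: at reverse-stage $k$ one takes every cover $x\lt y$ with $y$ unlabeled and the current label of $x$ equal to $k$, labels $y$ with $k$, and deletes the label from $x$. The heart of the argument is a single-stage lemma asserting that reverse-stage $k$ exactly undoes forward-stage $k$. Locally, forward-stage $k$ switches an unlabeled element to label $k$ precisely when it has an up-cover labeled $k$, and unlabels a $k$-labeled element precisely when it has a down-cover in the hole; reverse-stage $k$ performs the mirror-image switches, and a short case analysis on the cover edges between the hole and the $k$-labeled set shows the two operations are mutually inverse. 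Granting the lemma, the composite $\Psi\circ\Phi$ telescopes—reverse-stage $m$ undoes forward-stage $m$, then reverse-stage $m-1$ undoes forward-stage $m-1$, and so on—returning the hole to $A$ with every other label unchanged, after which $R^{-1}$ restores $L$.

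The main obstacle is the single-stage lemma together with the invariants that make it valid. The local case analysis fails if, when stage $k$ is processed, some hole element sits directly above an element currently labeled $k$, since such a cover edge is inert for the forward stage but active for the reverse one. I would rule this out with a joint induction on the stages establishing two invariants: that the current gapped labeling remains increasing, and that at the start of stage $k$ every labeled element lying strictly below a hole element carries a label smaller than $k$. The base case holds because $L^{-1}(1)$ consists of minimal elements, and the inductive step follows by tracking how each upward move transports labels. Verifying these invariants, and checking symmetrically that the hole stays an antichain throughout both $\Phi$ and $\Psi$, is the technical crux; once they are in place the local inversion and the telescoping of stages are routine.
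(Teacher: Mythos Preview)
Your approach is essentially the same as the paper's: both construct the inverse map by running the sliding process backwards (first undoing the relabeling, then sliding labels $m,m-1,\ldots,2$ upward into the hole), and the paper's one-line justification ``this is a step-by-step reversal of the algorithm'' is exactly your single-stage lemma telescoped. You are more explicit than the paper about the invariants needed to make the stage-by-stage inversion go through, and you save the check that the reverse map lands in $\cL_m(P)$ by invoking finiteness instead of verifying it is a two-sided inverse; but these are differences in the level of detail, not in the underlying idea.
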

\begin{proof}
It suffices to construct the inverse function.  Given $L\in\cL_m(P)$, add one to all the labels and remove the label $m+1$ everywhere it occurs.  Now for any pair 
$x\lt y$ where $L(x)=m$ and $y$ is unlabeled, we move the label $m$ to the unlabeled element.  This process is iterated using the labels $m-1,\ldots,2$ at which point all unlabeled elements will be minimal.  Label these minimal elements with $1$. This is a step-by-step reversal of the algorithm used to define $\pro_K$ and so is its inverse.
\end{proof}

By the previous proposition, $\pro_K$ induces a group action on $\cL_m(P)$.  The current work will investigate its order, $o(\pro_K)$, and orbit structure.  
Of course, $o(\pro_K)$ depends on $m$ even though the notation does not reflect this.  But context will always make the value of $m$ clear.
In the next section we will collect various results that hold for wide classes of posets.  For example, we show in Theorem~\ref{prin} that if $P$ contains a certain type of chain then $o(\pro_K)$ is divisible by $m-1$.  We also find, using toggles, that for certain posets there is an equivariant bijection between the action of $\pro_K$ on $\cL_m(P)$ and the inverse of rowmotion on a subset of the ideals of $P$;  see Theorem~\ref{ProRhoEq}.  The rest of the paper will consider the  action of $\pro_K$ on various rooted trees.  
A {\em rooted tree} is a poset, $T$, with a minimum element $\zh$ such the Hasse diagram of $T$ is a graph-theoretic tree.  In general, we will apply graph theory terminology to a poset via its Hasse diagram.
Section~\ref{es} is dedicated to extended stars which are posets formed by identifying the minimum elements of a number of chains.  We give the precise value of $o(\pro_K)$ for arbitrary stars and any $m$ in Theorem~\ref{AllSta}.  When all the chains have the same length, we give more detailed information about the orbit structure in Corollary~\ref{bSta}.
In Section~\ref{cz} we investigate combs (which are chains with certain maximal elements added) and zippers (which are formed by pasting together combs).  For example, let $C_n$ be the comb formed from the chain with $n$ elements.  We show in Theorem~\ref{C:2n} and Proposition~\ref{C:n+1Z:n+2} that every orbit of $\pro_K$ acting on $\cL_m(C_n)$ has the same size if $m=2n$ and $m=n+1$, respectively.  Section~\ref{tlt} is devoted to trees with three maximal elements having a certain configuration.  If the tree $T$ has $n$ elements, then we completely describe the orbits on $\cL_m(T)$ when $m=n-k$ for $k\in[0,2]$ in Theorem~\ref{TltThm}.  The last section is dedicated to questions and directions for future research.

\section{General posets}

We prove various results about $\pro_K$ as applied to a wide variety of $m$-packed labelings of posets.  We begin by bounding the values of $m$ which can be used.
To state the result, we need to define
the {\em length} of a chain $C$ in a poset $P$ to be $|C|-1$, and
the {\em height} of $P$ as 
$$
h(P) = \text{ length of a longest chain in $P$}.
$$
The next proposition is related to the notion of consistent labelings given in~\cite[Definition 2.1 and Lemma 2.18]{DSV:ril}.
\begin{prop}
\label{ht}
For any poset $P$, an  $m$-packed labeling of $P$ exists if and only if
$$
h(P)+1 \le m \le \#P.
$$
\end{prop}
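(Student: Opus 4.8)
The plan is to split the biconditional into the \emph{necessity} of the two bounds and an explicit construction establishing \emph{sufficiency}. For necessity, suppose an $m$-packed labeling $L:P\ra[m]$ exists. Since $L$ is by definition a surjection onto $[m]$, its domain must be at least as large, giving $m\le\#P$ (as already observed in the text). For the lower bound, choose a chain $x_0<_P x_1<_P\cdots<_P x_{h(P)}$ attaining the height, which has $h(P)+1$ elements. Because $L$ is increasing, the values $L(x_0)<L(x_1)<\cdots<L(x_{h(P)})$ are $h(P)+1$ distinct elements of $[m]$, forcing $m\ge h(P)+1$.

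For sufficiency I would first build a labeling realizing the extreme value $m=h(P)+1$, and then raise the number of distinct labels one at a time. Let $r(x)$ be the length of a longest chain of $P$ whose top element is $x$, so $r:P\ra\{0,1,\ldots,h(P)\}$. If $x<_P y$ then appending $y$ to a longest chain ending at $x$ shows $r(y)\ge r(x)+1$, so $g:=r+1$ is increasing with image contained in $[h(P)+1]$. Evaluating $g$ along the longest chain above shows $r(x_i)=i$ for each $i$: a strictly longer chain below some $x_i$ would splice with $x_i<_P\cdots<_P x_{h(P)}$ into a chain longer than $h(P)$. Hence $g$ is surjective, so $g$ is an $(h(P)+1)$-packed labeling.

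The engine for the rest of the range is a splitting step: if $f$ is $m'$-packed with $m'<\#P$, then $P$ admits an $(m'+1)$-packed labeling. Because $m'<\#P$, pigeonhole gives a fiber $f^{-1}(j)$ with at least two elements, and any fiber of an increasing map is an antichain. Partition it into nonempty parts $A\sqcup B$ (any partition is legal, since there are no comparabilities inside an antichain), and define $f'$ to agree with $f$ below $j$, to send $A$ to $j$ and $B$ to $j+1$, and to increment every label exceeding $j$ by one. One checks $f'$ is increasing and that it surjects onto $[m'+1]$. Iterating this step $m-(h(P)+1)$ times starting from $g$ yields an $m$-packed labeling for every $m$ with $h(P)+1\le m\le\#P$, completing sufficiency.

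I expect the routine-but-delicate point to be the splitting step: one must confirm both that the chosen fiber is genuinely an antichain (so an arbitrary partition is permissible) and that incrementing the upper labels preserves strict increase across the boundary value $j$, which reduces to the comparisons between the split fiber and the elements immediately above it. The other place needing a short argument with a maximum chain is the surjectivity of $g$, namely that the rank function attains every value in $\{0,\ldots,h(P)\}$.
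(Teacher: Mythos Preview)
Your proof is correct and follows essentially the same approach as the paper: both establish necessity via the longest chain and surjectivity, build the base labeling $g=r+1$ (the paper constructs the same function by iteratively stripping off minimal elements, which is just computing $r$), and then increment $m$ one step at a time by splitting an oversized fiber and shifting the labels above it up by one. The only cosmetic difference is that the paper bumps up a single element of the chosen fiber while you allow an arbitrary nonempty two-block partition, but both work for the same reason you identify, namely that the fiber is an antichain.
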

\begin{proof}

(a)  Throughout this proof, let $C$ be a longest chain of $P$ so that $\#C=h(P)+1$. For the forward direction, suppose there exists $L\in\cL_m(P)$.
Then the elements of $C$ must all receive different labels so that $m\ge \#C$.  And we have noted previously that $m\le\#P$.

For the converse, we will first suppose that $m=\#C$ and construct an $m$-packed labeling $L$  of $P$.  Let $P_1$ be the set of minimal elements of $P$ and label all these elements $1$.  Note that if $x$ is the minimal element of $C$ then $x\in P_1$ since, if not, then $x$ is above some element of $P_1$ and so $C$ can be made longer by appending this element.  Now label all the minimal elements of $P_2= P- P_1$ with $2$.  Continuing in this way we obtain subposets $P_1,\ldots,P_m$ where the labeling $L(x) = i$ for all $x\in P_i$ is $m$-packed, concluding this case.

Now suppose that $m<\#P$ and that, by induction on $m$, we have constructed an $m$-packed labeling $L$ of $P$.  We will construct an $(m+1)$-packed labeling $L'$ of the same poset.  Let $P_1,\ldots,P_m$ be the subposets of $P$ defined by $L(x) = i$ for all $x\in P_i$ and $i\in[m]$.  Since $m<\#P$, there is a $P_i$ with $\#P_i\ge 2$.  Pick some  $y\in P_i$.  We now define a labeling $L'$ of $P$ by
$$
L'(x) =
\begin{cases}
L(x)     &\text{if $x\in P_j$ for $j<i$ or $x\in P_i-\{y\}$},\\
i+1         &\text{if $x=y$},\\
L(x)+1   &\text{if $x\in P_j$ for $j>i$}.\\
\end{cases}
$$
It is easy to check that this is an $(m+1)$-packed labeling as desired.
\end{proof}

We will now prove some results which will show how certain subposets of $P$ will affect 
the orbit structure of $\pro_K$.  It will sometimes be best to express the action of $\pro_K$ in terms of toggles.   
An analogous presentation for the increasing labeled poset generalization of $\pro_K$ was given in~\cite{DSV:ril} (and in prior work~\cite[Proposition 2.5]{DPS:rop} in the increasing tableau case).
This is a $K$-theoretic analogue of the description of ordinary promotion in terms of Bender-Knuth toggles, see~\cite[Definition 13]{str:dac}.
    For $L\in\cL_m(P)$ and $1\leq i\leq m-1$, the $i$th {\em $K$-promotion toggle} $s_i$ acts on $L$ by setting, for each $x\in P$,
    \[
        s_i(L)(x)=
        \begin{cases}
            i+1 \hspace*{-8pt}&\mbox{if } L(x)=i \mbox{ \& the resulting labeling is still increasing},\\
            i   \hspace*{-8pt}&\mbox{if } L(x)=i+1 \mbox{ \&  the resulting labeling is still increasing},\\
            L(x) \hspace*{-8pt}&\mbox{otherwise.}\\
        \end{cases}
    \]

See Figure~\ref{ProTogFig} for an example. 

\begin{figure}
    \centering
\begin{tikzpicture}
\fill(2,0) circle(.1);
\fill(0,1) circle(.1);
\fill(2,1) circle(.1);
\fill(4,1) circle(.1);
\fill(0,2) circle(.1);
\fill(2,2) circle(.1);
\fill(4,2) circle(.1);
\fill(2,3) circle(.1);
\draw(2,-.5) node{$1$};
\draw(-.5,1) node{$3$};
\draw(1.5,1) node{$2$};
\draw(4.5,1) node{$2$};
\draw(-.5,2) node{$5$};
\draw(1.5,2) node{$4$};
\draw(4.5,2) node{$3$};
\draw(2,3.5) node{$6$};
\draw (2,0)--(0,1)--(0,2)--(2,3)--(4,2)--(4,1)--(2,0)--(2,3) (0,1)--(2,2)--(4,1);
\draw(6,1.5) node{$s_2$};
\draw(6,1) node{$\mapsto$};
\begin{scope}[shift={(8,0)}]
\fill(2,0) circle(.1);
\fill(0,1) circle(.1);
\fill(2,1) circle(.1);
\fill(4,1) circle(.1);
\fill(0,2) circle(.1);
\fill(2,2) circle(.1);
\fill(4,2) circle(.1);
\fill(2,3) circle(.1);
\draw(2,-.5) node{$1$};
\draw(-.5,1) node{$2$};
\draw(1.5,1) node{$3$};
\draw(4.5,1) node{$2$};
\draw(-.5,2) node{$5$};
\draw(1.5,2) node{$4$};
\draw(4.5,2) node{$3$};
\draw(2,3.5) node{$6$};
\draw (2,0)--(0,1)--(0,2)--(2,3)--(4,2)--(4,1)--(2,0)--(2,3) (0,1)--(2,2)--(4,1);
\end{scope}
\end{tikzpicture}
    \caption{An example of the $K$-promotion toggle $s_2$}
    \label{ProTogFig}
\end{figure}
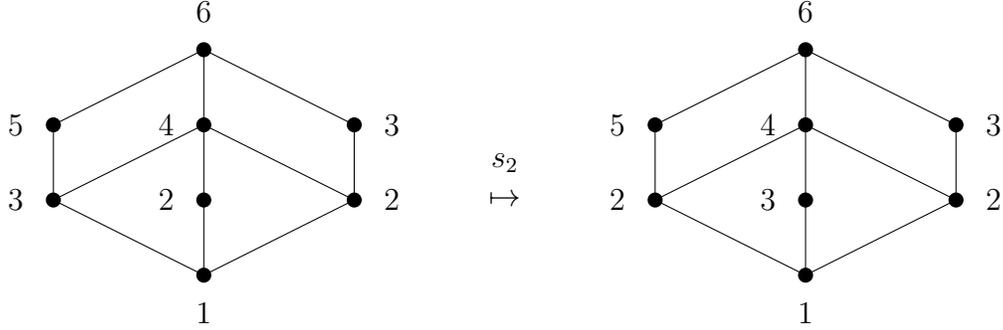

\begin{thm}[{\cite[Theorem 3.8]{DSV:ril}}]
\label{s_iThm}
    \label{ProTogThm}
    The action of $\pro_K$ on $L\in\cL_m(P)$ is given by

\vs{10pt}
    
    \eqqed{\pro_K(L)=s_{m-1}s_{m-2}\cdots s_1(L).}
\end{thm}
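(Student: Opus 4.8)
The plan is to run the $K$-promotion algorithm and the toggle word $s_{m-1}\cdots s_1$ in parallel, matching them one step at a time through a common bookkeeping device. First I would pin down exactly what $s_i$ does: I claim $s_i$ simultaneously raises every element labeled $i$ that has no cover labeled $i+1$, lowers every element labeled $i+1$ that covers no element labeled $i$, and fixes everything else. The increasing condition is immediate from these ``no offending neighbor'' clauses, and one checks that a raised element can never lie below a lowered one, since an intermediate element would need a label strictly between $i$ and $i+1$; thus no new violation of~\eqref{NatL} is created. For surjectivity, the value $i$ can disappear only if every $i$ is raised while every $i+1$ is blocked from below, and these two requirements are contradictory; the value $i+1$ is symmetric. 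Hence $s_i$ is a well-defined involution on $\cL_m(P)$ realized by these free raises and lowers, and I will use this description throughout.

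Next I would introduce two auxiliary families. Let $\Gamma_j$ denote the gapped labeling produced by the $\pro_K$ bumping algorithm after the labels $2,3,\ldots,j$ have been processed, so $\Gamma_1$ is $L$ with its $1$'s deleted and $\Gamma_m$ is the terminal configuration in which every unlabeled element is maximal. For a gapped labeling $\Gamma$ and an index $j$, let $\Phi_j(\Gamma)$ be the honest labeling obtained by decrementing every label that is at most $j+1$, leaving larger labels fixed, and filling every hole with $j+1$. The heart of the argument is the claim
\[
s_j s_{j-1}\cdots s_1(L)=\Phi_j(\Gamma_{j+1}),\qquad 0\le j\le m-1,
\]
which I would prove by induction on $j$. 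The base case $j=0$ is immediate: $\Gamma_1$ has all labels $\ge 2$, so $\Phi_0$ decrements nothing and merely refills the deleted $1$'s, returning $L$. Taking $j=m-1$ recovers the theorem, since $\Phi_{m-1}(\Gamma_m)$ decrements all labels by one and fills the now-maximal holes with $m$, which is precisely the final step in the definition of $\pro_K$.

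For the inductive step I would assume $s_{j-1}\cdots s_1(L)=\Phi_{j-1}(\Gamma_j)$ and read off from $\Phi_{j-1}$ that, in this labeling, the elements labeled $j$ are exactly the holes $H$ of $\Gamma_j$, while the elements labeled $j+1$ are exactly those carrying label $j+1$ in $\Gamma_j$. Applying $s_j$ and comparing element by element with $\Phi_j(\Gamma_{j+1})$: a hole $u\in H$ is raised by $s_j$ exactly when it has no cover labeled $j+1$, which is exactly when the bumping step for $j+1$ leaves $u$ a hole, so that $\Phi_j$ fills it with $j+1$; dually, an element $v$ labeled $j+1$ is lowered by $s_j$ exactly when it covers no element of $H$, which is exactly when the bumping step does not empty $v$, so that $\Phi_j$ decrements it to $j$. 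Elements whose $\Gamma_j$-label avoids $\{j,j+1\}$ are untouched by $s_j$ and are shifted identically on the two sides. Matching these cases gives $s_j\bigl(\Phi_{j-1}(\Gamma_j)\bigr)=\Phi_j(\Gamma_{j+1})$. The only genuine obstacle is the bookkeeping in this step: the bumping move for a fixed label can fill several holes simultaneously and can empty one element into several holes, so I must verify the case analysis is exhaustive and that the parallel raises and lowers effected by $s_j$ never collide---exactly the point secured by the well-definedness discussion of the first paragraph.
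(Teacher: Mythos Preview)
The paper does not supply its own proof of this theorem: it is quoted from \cite[Theorem 3.8]{DSV:ril} and closed immediately with a \qed, so there is no argument in the paper to compare against.

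Your proof is correct and self-contained. The interpolating labelings $\Phi_j(\Gamma_{j+1})$ are exactly the right bookkeeping device, and your four-way case analysis in the inductive step matches the bumping rule to the toggle rule on the nose. Two small remarks. First, your opening paragraph is doing real work: the paper's phrasing ``the resulting labeling is still in $\cL_m(P)$'' is, strictly read, ambiguous about whether surjectivity is being tested element-by-element; your reformulation in terms of the purely local cover conditions, together with your argument that the image $\{i,i+1\}$ is preserved, resolves this and is the intended meaning. Second, you assert that $\Gamma_m$ is terminal (all holes maximal) without justification; this follows from the easy induction that in $\Gamma_j$ every cover of a hole carries a label strictly greater than $j$, which you might state explicitly. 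With that one line added, the argument is complete.
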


From here on out, all of our  posets $P$ will have a unique minimal element denoted  $\zh$.  
If $P\supset\{\zh\}$ then $\zh$ is covered by one or more elements.
We wish to show that if $\zh$ is covered by a single element, then $\pro_K$ acting on $\cL_m(P)$ has the same orbit structure as its action on $\cL_{m'}(P')$  for a certain $m'<m$ and subposet $P'$ of $P$.  We will do this by constructing an equivariant bijection.  But first, some definitions.

The {\em trunk}, $T$, of a poset $P$  having a $\zh$ is the longest chain
$$
\zh=x_1\lt x_2 \lt\ldots\lt x_t
$$
Such that each $x_i\in T$ is covered by a single element of $P$.  See Figure~\ref{tru:fig} for an example.

Write $G=\spn{g}$ to indicate that the group $G$ is generated by the element $g$.
Suppose $G=\spn{g}$ acts on $A$ and $H=\spn{h}$ acts on $B$ where $A,B$ are arbitrary sets.  So $g:A\ra A$ and $h:B\ra B$ are bijections.
The actions of $g$ on $A$ and $h$ on $B$ are {\em equivariant}, written $(g,A)\equiv(h,B)$, if there is a bijection $\phi:A\ra B$ such that
\beq
\label{equi:def}
h\phi = \phi g
\eeq
as maps from $A$ to $B$.  
It is well known that equivariant actions have the same orbit structure. 

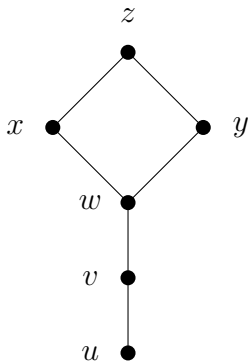
\begin{figure}
\begin{center}
\begin{tikzpicture}
\fill(0,0) circle(.1);
\fill(0,1) circle(.1);
\fill(0,2) circle(.1);
\fill(-1,3) circle(.1);
\fill(1,3) circle(.1);
\fill(0,4) circle(.1);
\draw(-.5,0) node{$u$};
\draw(-.5,1) node{$v$};
\draw(-.5,2) node{$w$};
\draw(-1.5,3) node{$x$};
\draw(1.5,3) node{$y$};
\draw(0,4.5) node{$z$};
\draw (0,0)--(0,2)--(-1,3)--(0,4)--(1,3)--(0,2);
\end{tikzpicture}
\end{center}
    \caption{The trunk of this poset $P$ is $T=\{u,v\}$}
    \label{tru:fig}
\end{figure}

\begin{prop}
\label{tru:pro}
 Let $P$ be a poset with a $\zh$. Consider the action of $\pro_K$ on $\cL_m(P)$. Suppose $P$ has trunk $T$ where we let $t=\#T$.
 Set $P'=P-T$ and $m'=m-t$.
 Then
 $$
 (\pro_K,\cL_m(P)) \equiv (\pro_K, \cL_{m'}(P')).
 $$
\end{prop}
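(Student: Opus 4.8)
The plan is to exploit the toggle description of $\pro_K$ from Theorem~\ref{ProTogThm}, combined with the observation that in \emph{every} $m$-packed labeling of $P$ the trunk is labeled completely rigidly, so it carries no dynamical information and can be stripped off. Write $T=\{x_1\lt x_2\lt\cdots\lt x_t\}$ with $x_1=\zh$, and let $y$ denote the unique cover of $x_t$ (unique because $x_t\in T$).

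First I would record the rigidity of the trunk. Since each $x_i$ has a single cover, every saturated chain from $\zh$ must begin $x_1\lt x_2\lt\cdots\lt x_t\lt y$; hence every element of $P'=P-T$ lies above $y$, so $y$ is the unique minimal element of $P'$, and every element outside $\{x_1,\dots,x_{i-1}\}$ lies above $x_i$. A short induction on $i$ using surjectivity then shows that for each $i\in[t]$ we have $L(x_i)=i$ and that $x_i$ is the \emph{unique} element of $P$ labeled $i$: the label $i$ is achieved by some $z$, which cannot be any $x_j$ with $j<i$, hence $z\ge x_i$, and $z>x_i$ would force $L(z)>L(x_i)\ge i$, a contradiction. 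Consequently $P'$ is labeled surjectively onto $[t+1,m]$ and $y$ is the unique element labeled $t+1$. This rigidity yields the bijection directly: I would set $\phi(L)(z)=L(z)-t$ for $z\in P'$, which is an $m'$-packed labeling of $P'$ with $m'=m-t$ since the order relations on $P'$ are inherited and the labels surject onto $[t+1,m]$; the inverse reattaches the rigid trunk (labeling $x_i$ by $i$ and adding $t$ to the labels on $P'$), and this lands in $\cL_m(P)$ precisely because every element of $P'$ lies above all of $T$.

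For equivariance I would argue at the level of toggles. The toggles $s_1,\dots,s_t$ act as the identity on all of $\cL_m(P)$: for $1\le i\le t$, the unique element labeled $i$ is $x_i$ and the unique element labeled $i+1$ is $x_{i+1}$ (with $x_{t+1}:=y$), and since $x_i\lt x_{i+1}$, neither flip can keep the labeling increasing. By Theorem~\ref{ProTogThm} this gives $\pro_K=s_{m-1}\cdots s_{t+1}$ on $\cL_m(P)$. Next, writing $s'_j$ for the $j$th toggle on $\cL_{m'}(P')$, I would establish the intertwining relation $\phi\circ s_{t+j}=s'_j\circ\phi$ for each $j\in[m'-1]$: every element touched by $s_{t+j}$ is labeled $t+j$ or $t+j+1$ and so lies in $P'$, and the shift by $t$ matches the increasingness and surjectivity tests for $s_{t+j}$ on $P$ with those for $s'_j$ on $P'$. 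Composing these relations along $\pro_K=s_{t+(m'-1)}\cdots s_{t+1}$ gives
$$
\phi\circ\pro_K=(s'_{m'-1}\cdots s'_1)\circ\phi=\pro_K\circ\phi,
$$
which is exactly the condition~\eqref{equi:def} defining $(\pro_K,\cL_m(P))\equiv(\pro_K,\cL_{m'}(P'))$.

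The main obstacle is the intertwining relation at the interface between $T$ and $P'$, namely the case $j=1$ at the element $y$. There the toggle $s_{t+1}$ on $P$ sees the cover relation $x_t\lt y$, which has no counterpart in $P'$, so I must verify this edge never changes the outcome. It does not: the only flip it could influence is raising $y$ from $t+1$ to $t+2$, and this is already forbidden by surjectivity, since $y$ is the unique element labeled $t+1$ — exactly as the corresponding raise of $y$ from $1$ to $2$ is forbidden for $s'_1$ on $P'$. Recognizing that it is surjectivity, rather than the increasing condition, that enforces the match here is the one genuinely delicate point of the argument.
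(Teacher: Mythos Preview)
Your proof is correct and follows essentially the same approach as the paper's: both construct the shift-by-$t$ bijection $\phi(L)(z)=L(z)-t$ and deduce equivariance from the toggle description in Theorem~\ref{ProTogThm}, using that the first several toggles act trivially because the trunk labels are rigid. You are simply more explicit than the paper in spelling out the intertwining relations $\phi\circ s_{t+j}=s'_j\circ\phi$ and in handling the boundary case at $y$; the paper compresses all of this into a single sentence.
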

\begin{proof}
Throughout we will add a prime to a notation for posets when it is applied to $P'$.
Let 
$$
T:\zh=x_1\lt x_2 \lt\ldots\lt x_t
$$
and suppose the element covering $x_t$ is $x_{t+1}$.
Then any $m$-packed labeling $L$ of $P$ has 
\beq
\label{tru:lab}
L(x_i)=i
\eeq
for $i\in [t+1]$.  It follows that the map
$\phi:\cL_m(P) \ra \cL_{m'}(P')$
given by $\phi(L)=L'$ where
\beq
\label{L'}
L'(x) = L(x) - t
\eeq
for $x\in P'$ is a well-defined bijection, i.e., $L'\in\cL_{m'}(P')$.
The fact that this bijection is equivariant now follows from Theorem~\ref{s_iThm} and the fact that, on $P$, the toggles $s_i$ for $i\in[t+1]$ act as the identity map.
\eprf

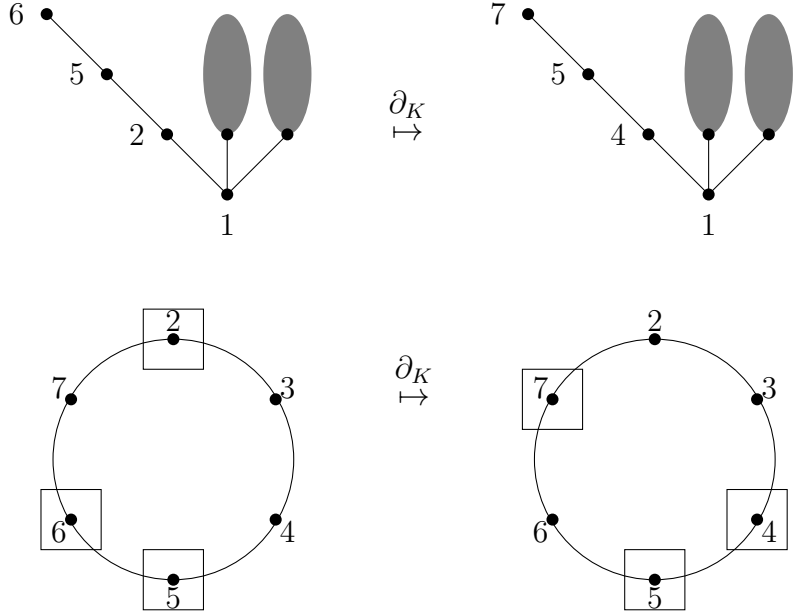
\begin{figure}
    \centering
\begin{tikzpicture}[scale=.8]
\fill[gray] (3,2) ellipse (.4 and 1);
\fill[gray] (4,2) ellipse (.4 and 1);
\fill(0,3) circle(.1);
\fill(1,2) circle(.1);
\fill(2,1) circle(.1);
\fill(3,0) circle(.1);
\fill(3,1) circle(.1);
\fill(4,1) circle(.1);
\draw (0,3)--(3,0)--(3,1) (3,0)--(4,1);
\draw(3,-.5) node{$1$};
\draw(1.5,1) node{$2$};
\draw(.5,2) node{$5$};
\draw(-.5,3) node{$6$};
\draw(6,1.5) node{$\pro_K$};
\draw(6,1) node{$\mapsto$};
\begin{scope}[shift={(8,0)}]
\fill[gray] (3,2) ellipse (.4 and 1);
\fill[gray] (4,2) ellipse (.4 and 1);
\fill(0,3) circle(.1);
\fill(1,2) circle(.1);
\fill(2,1) circle(.1);
\fill(3,0) circle(.1);
\fill(3,1) circle(.1);
\fill(4,1) circle(.1);
\draw (0,3)--(3,0)--(3,1) (3,0)--(4,1);
\draw(3,-.5) node{$1$};
\draw(1.5,1) node{$4$};
\draw(.5,2) node{$5$};
\draw(-.5,3) node{$7$};
\end{scope}
\end{tikzpicture}

\vs{20pt}

\hspace*{10pt}
\begin{tikzpicture}[scale=.8]
\draw(2,0) circle(2);
\fill(2,2) circle(.1);
\fill(3.7,1) circle(.1);
\fill(3.7,-1) circle(.1);
\fill(2,-2) circle(.1);
\fill(.3,-1) circle(.1);
\fill(.3,1) circle(.1);
\draw(2,2.3) node{$2$};
\draw(3.9,1.2) node{$3$};
\draw(3.9,-1.2) node{$4$};
\draw(2,-2.3) node{$5$};
\draw(.1,-1.2) node{$6$};
\draw(.1,1.2) node{$7$};
\draw(1.5,1.5) rectangle (2.5,2.5);
\draw(1.5,-1.5) rectangle (2.5,-2.5);
\draw(-.2,-.5) rectangle (.8,-1.5);
\draw(6,1.5) node{$\pro_K$};
\draw(6,1) node{$\mapsto$};
\begin{scope}[shift={(8,0)}]
\draw(2,0) circle(2);
\fill(2,2) circle(.1);
\fill(3.7,1) circle(.1);
\fill(3.7,-1) circle(.1);
\fill(2,-2) circle(.1);
\fill(.3,-1) circle(.1);
\fill(.3,1) circle(.1);
\draw(2,2.3) node{$2$};
\draw(3.9,1.2) node{$3$};
\draw(3.9,-1.2) node{$4$};
\draw(2,-2.3) node{$5$};
\draw(.1,-1.2) node{$6$};
\draw(.1,1.2) node{$7$};
\draw(-.2,.5) rectangle (.8,1.5);
\draw(3.2,-1.5) rectangle (4.2,-.5);
\draw(1.5,-1.5) rectangle (2.5,-2.5);
\end{scope}
\end{tikzpicture}
\caption{$K$-promotion acts cyclically on the  branch $B=\{2,5,6\}$}
        \label{CycPath}
\end{figure}

Given a poset $P$ with $\zh$, the {\em principal subposets} of $P$ are the connected components of $P-\zh$.  A principal subposet which is a chain is called a {\em branch}.  If $a,b\in\bbN$ then  $\gcd(a,b)$ will denote their greatest common divisor and we write $a\mid b$ if $a$ divides evenly into $b$.
\begin{thm}
\label{prin}
	Let $P$ be a poset with $\zh$ having a branch on $k$ vertices, where $1\leq k\leq m-2$.  Consider the action of $\pro_K$ on $\cL_m(P)$.
    \begin{enumerate}[label=\textup{(\alph*)}]
        \item We have 
        $$
        m-1\mid o(\pro_K).
        $$
        \item If $\gcd(k,m-1)=1$, then $m-1$ divides the size of any orbit of the action. 
    \end{enumerate}
\end{thm}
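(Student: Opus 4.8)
The plan is to track how $\pro_K$ acts on the \emph{set of labels occurring on the branch} and to show that this induced action is a single cyclic rotation; both divisibility statements then drop out from elementary facts about cyclic groups acting on subsets.

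Let $B\colon b_1\lt b_2\lt\cdots\lt b_k$ be the branch, so $b_1$ covers $\zh$ and $b_k$ is maximal. Since $\zh$ is the unique minimal element, in any $L\in\cL_m(P)$ the only element labeled $1$ is $\zh$, and $B$ receives a strictly increasing label sequence; I encode this by $\beta(L)=\{L(b_1)<\cdots<L(b_k)\}\in\binom{[2,m]}{k}$. Identifying $[2,m]$ with $\bbZ/(m-1)$, let $\rho$ be the rotation $j\mapsto j-1$ (with $2\mapsto m$); it induces a permutation of $\binom{[2,m]}{k}$, still written $\rho$. The central claim, which I regard as the crux of the argument, is that $\beta(\pro_K L)=\rho(\beta(L))$ for every $L$. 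To prove it I would run the defining jeu-de-taquin algorithm for $\pro_K$ and argue that the branch evolves \emph{independently} of the rest of $P$. After the label $1$ is deleted, the unique hole sits at $\zh$, whose only cover inside the branch component is $b_1$; thus a hole enters $B$ precisely when $L(b_1)=2$, and because $b_2,\ldots,b_k$ have all their covers and lower covers inside $B$, that hole then climbs monotonically $b_1\to b_2\to\cdots\to b_k$ and exits at $b_k$. Reading off the labels before and after the final decrement gives exactly $\rho$: if $L(b_1)=2$ the smallest branch label wraps up to $m$, and otherwise every branch label simply drops by one.

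The hard part is precisely this independence: I must show that the merging/splitting of holes elsewhere in $P$, and the simultaneous processing of all equal labels in a given round, cannot disturb $B$ except through the single channel $\zh\to b_1$. I would establish it by induction on the rounds, using that $b_2,\ldots,b_k$ are incomparable to every element of $P\setminus B$ and that $\zh$ is refilled only once (in round $2$), so no second hole can ever reach $B$. (One could instead phrase this through the toggle presentation $\pro_K=s_{m-1}\cdots s_1$ of Theorem~\ref{s_iThm}, but the direct algorithm makes the locality cleanest.)

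Granting the claim, part (b) is immediate: $\rho$ is an $(m-1)$-cycle on $[2,m]$, so the stabilizer in $\langle\rho\rangle$ of any $k$-set $S$ has order dividing $\gcd(k,m-1)=1$, whence every $\rho$-orbit has size $m-1$. If $L$ lies in a $\pro_K$-orbit of size $N$, then $\rho^{N}(\beta(L))=\beta(\pro_K^{N}L)=\beta(L)$, so $m-1\mid N$. For part (a) I lack the gcd hypothesis, so I would exhibit a single labeling whose branch-set has full $\rho$-orbit. Put the branch at the top, $L(b_i)=m-k+i$, so that $\beta(L)=[m-k+1,m]$ is a cyclic interval of length $k\le m-2$, which has trivial stabilizer and hence $\rho$-orbit size $m-1$. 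It remains to complete this to an $m$-packed labeling, i.e.\ to label $Q:=P\setminus B$ (a poset with minimum $\zh$ and $\#Q=\#P-k\ge m-k$ elements) by an increasing map into $[m]$ whose image contains $[1,m-k]$: if $h(Q)\le m-k-1$ take an $(m-k)$-packed labeling of $Q$, which exists by Proposition~\ref{ht}; if $h(Q)\ge m-k-1$ take the rank labeling $x\mapsto\mathrm{lev}_Q(x)+1$, whose image $[1,h(Q)+1]$ contains $[1,m-k]$ and lies in $[m]$. Since $B$ is incomparable to $Q\setminus\zh$, the combined labeling is increasing and surjective onto $[m]$. The $\pro_K$-orbit of this $L$ then has size divisible by the $\rho$-orbit size $m-1$, and as $o(\pro_K)$ is a common multiple of all orbit sizes, $m-1\mid o(\pro_K)$.
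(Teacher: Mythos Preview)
Your argument is correct and follows essentially the same line as the paper's: both identify the induced action of $\pro_K$ on the branch label set as the cyclic rotation $j\mapsto j-1$ on $[2,m]$, then use an interval of labels for (a) and the gcd condition for (b). Your write-up is more thorough in two places where the paper is terse---you justify the independence of the branch from the rest of $P$ via the single channel $\zh\to b_1$, and you explicitly construct a witness labeling for (a) using Proposition~\ref{ht} (the paper simply asserts such an $L$ exists)---but the overall strategy is the same.
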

\begin{proof}
(a)  Let $B$ be a $k$-vertex  branch of $P$, and $L\in\cL_m(P)$. We study the 
effect on $B$ induced by computing $\pro_K L$. 
See, for example, the top line  in Figure~\ref{CycPath} where  $m=7$ and $k=3$.
Consider the labels $2,3,\ldots,m$ written clockwise around a circle with the $k$ vertices of $B$ represented as boxes placed around their labels in $L$.  See the bottom line in Figure~\ref{CycPath}. Then $\pro_K$ simply rotates the boxes one step counterclockwise around the circle.  

    To finish the proof it suffices, since $o(\pro_K)$ is the least common multiple of all the orbit sizes,  to find an orbit $\cO$ of the action of $\pro_K$ on $\cL_m(P)$ such that $m-1\mid\#\cO$ .  Let $L\in\cL_m(P)$  be such that the induced labeling on $B$ is an interval of $k$ integers and let $\cO$ be the $\pro_K$ orbit containing $L$.  From the rotational description in the previous paragraph, the labels on this branch will first return to the given interval after $m-1$ applications of $\pro_K$.  It follows that 
    $m-1 \mid \#\cO$ as desired.

\medskip

(b) If $\gcd(k,m-1)=1$ then a similar argument to (a) shows that   
$m-1 \mid \#\cO$ no matter which $k$ integers appear on $B$.
\end{proof}

Combining  Proposition~\ref{tru:pro} and Theorem~\ref{prin} give the following result.
\begin{cor}
 Let $P$ be a poset with $\zh$ and trunk $T$ having $|T|=t$.  Suppose $P':=P-T$ satisfies the hypotheses of Theorem~\ref{prin}.
 \begin{enumerate}
     \item The order of $\pro_K$ on $P$ is divisible by $m-t-1$.
     \item If $\gcd(k,m-t-1)=1$, then $m-t-1$ divides the size of any orbit on $P$.\hqed
 \end{enumerate}
\end{cor}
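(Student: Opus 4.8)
The plan is to chain the two preceding results together through the equivariance machinery. First I would invoke Proposition~\ref{tru:pro} with the given trunk $T$: setting $m'=m-t$, it produces the equivariance $(\pro_K,\cL_m(P))\equiv(\pro_K,\cL_{m'}(P'))$ via the bijection $\phi$ of that proof. Since equivariant actions have the same orbit structure (as noted just before Proposition~\ref{tru:pro}), the two actions share the same order and the same multiset of orbit sizes, with $\phi$ carrying each orbit bijectively onto an orbit of equal size. Hence it suffices to establish the divisibility statements for the action of $\pro_K$ on $\cL_{m'}(P')$ and transport them back along $\phi$.

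Next I would apply Theorem~\ref{prin} directly to $P'$ using the parameter $m'$. By hypothesis $P'$ meets the assumptions of that theorem, so $P'$ has a $\zh$ and a branch on $k$ vertices with $1\le k\le m'-2$. Part (a) then yields $m'-1\mid o(\pro_K)$ for the action on $P'$; since $m'-1=m-t-1$ and the order is preserved by the equivariance, this is exactly statement (1). For statement (2), I would note that the coprimality hypothesis $\gcd(k,m-t-1)=1$ is precisely $\gcd(k,m'-1)=1$, so part (b) shows $m'-1=m-t-1$ divides the size of every orbit of $\pro_K$ on $P'$; because $\phi$ preserves orbit sizes, the same divisibility holds for every orbit on $P$.

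The argument is essentially bookkeeping, so there is no genuine obstacle; the only point requiring care is the index shift. I must track that the packing parameter fed into Theorem~\ref{prin} is $m'=m-t$ rather than $m$, so that ``$m-1$'' in that theorem becomes ``$m'-1=m-t-1$'' and the coprimality condition reads $\gcd(k,m'-1)=1$. I would also double-check that the branch bound $1\le k\le m'-2=m-t-2$ demanded by Theorem~\ref{prin} is exactly what the clause ``$P'$ satisfies the hypotheses of Theorem~\ref{prin}'' supplies, so that no hidden extra constraint on $m$ sneaks in.
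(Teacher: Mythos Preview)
Your proposal is correct and matches the paper's approach exactly: the paper states the corollary as an immediate consequence of ``Combining Proposition~\ref{tru:pro} and Theorem~\ref{prin}'' and supplies no further argument, which is precisely the chaining (equivariance plus index shift $m\mapsto m'=m-t$) you spell out.
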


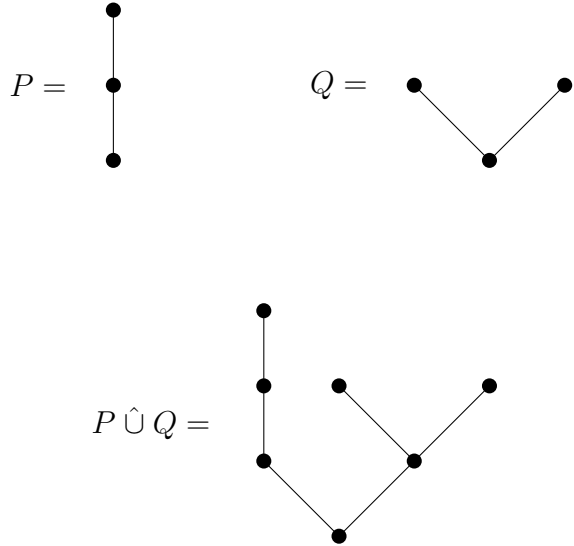
\begin{figure}
    \centering
\begin{tikzpicture}
\fill(0,0) circle(.1);
\fill(0,1) circle(.1);
\fill(0,2) circle(.1);
\draw (0,0)--(0,2);
\draw(-1,1) node{$P=$};
\begin{scope}[shift={(5,0)}]
\fill(0,0) circle(.1);
\fill(-1,1) circle(.1);
\fill(1,1) circle(.1);
\draw (-1,1)--(0,0)--(1,1);
\draw(-2,1) node{$Q=$};
\end{scope}
\begin{scope}[shift={(3,-5)}]
\fill(-1,1) circle(.1);
\fill(-1,2) circle(.1);
\fill(-1,3) circle(.1);
\fill(0,0) circle(.1);
\fill(0,2) circle(.1);
\fill(1,1) circle(.1);
\fill(2,2) circle(.1);
\draw (-1,3)--(-1,1)--(0,0)--(2,2) (1,1)--(0,2);
\draw(-2.5,1.5) node{$P\bdu Q=$};
\end{scope}
\end{tikzpicture}
    
    \caption{The bounded union of posets $P$ and $Q$}
    \label{oplusFig}
\end{figure}

Because of Proposition~\ref{tru:pro}, we will only consider posets having an empty trunk in what follows.  The orbit information for a poset with a trunk can easily be determined from the corresponding trunkless poset and the previous corollary.

We wish to define a way of combining posets which behaves well with respect to $\pro_K$.
If $P$ and $Q$ are disjoint posets then their {\em bounded union} is
$$
P \bdu Q = \text{ $P\uplus Q$ with a minimum element $\zh$ adjoined}
$$
where $\uplus$ is disjoint union.  An example of this construction is given in Figure~\ref{oplusFig}.  Also, if $P$ is any poset with $\zh$ and $\cO_1,\ldots,\cO_k$ are the orbits of $\pro_K$ on the $\cL_m(P)$ then we will consider the multiset
\beq
\label{omP}
o_m(P) =\{\{\#\cO_1,\ldots,\#\cO_k\}\}.
\eeq
Finally, we use $\lcm$ for least common multiple.
\begin{prop}
\label{oplusProp}
Let $P,Q$ be posets each with a $\zh$ and satisfying $h(P)=h(Q):=h$.  Suppose that $m=h+1$ with $o_m(P) = \{\{c_1,\ldots,c_k\}\}$ and $o_m(Q)=\{\{d_1,\ldots,d_l\}\}$.  Then
$$
o_{m+1}(P\bdu Q) =\{\{\lcm(c_i,d_j) \mid 1\le i\le k,\ 1\le j\le l\}\}.
$$
\end{prop}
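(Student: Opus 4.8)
The plan is to build an equivariant bijection between $(\pro_K,\cL_{m+1}(P\bdu Q))$ and the product action of $\pro_K\times\pro_K$ on $\cL_m(P)\times\cL_m(Q)$, and then read off the orbit sizes from the standard behaviour of a cyclic group on a product. First I would record the packing data. Since $h(P)=h(Q)=h$ and the two posets meet only through the adjoined minimum, a longest chain of $P\bdu Q$ is $\zh$ followed by a longest chain of $P$ (or of $Q$), so $h(P\bdu Q)=h+1$ and $m+1=h+2$ is exactly the smallest admissible value permitted by Proposition~\ref{ht}. For any $L\in\cL_{m+1}(P\bdu Q)$ the unique minimum forces $L(\zh)=1$ and $L(x)\ge 2$ otherwise; a longest chain of $P$ has $m$ elements receiving $m$ distinct labels from the $m$-element set $[2,m+1]$, so $L$ restricted to $P$ is already onto $[2,m+1]$, and likewise on $Q$. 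Hence $\phi(L):=(L|_P-1,\ L|_Q-1)$ is a well-defined map into $\cL_m(P)\times\cL_m(Q)$, and it is a bijection with the evident inverse (shift both coordinates up by one and glue them beneath a new $\zh$ labelled $1$).

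Next I would transport the toggle description of Theorem~\ref{s_iThm}, writing $\pro_K=s_ms_{m-1}\cdots s_1$ on $P\bdu Q$. Here $s_1$ is the identity: the only element labelled $1$ is $\zh$, and raising it to $2$ destroys surjectivity, while the only elements that can carry label $2$ are the minima $\zh_P,\zh_Q$ of $P$ and $Q$, each sitting directly above $\zh$ and so unable to be lowered to $1$. For $i\ge 2$ the toggle $s_i$ never touches $\zh$, and since $P$ and $Q$ share no cover relations except through $\zh$, the increasing condition defining $s_i$ splits into independent conditions on the $P$-labels and the $Q$-labels; under the shift in $\phi$ this identifies $s_i$ with the pair $(s_{i-1}^{P},s_{i-1}^{Q})$ acting coordinatewise. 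Composing and using that toggles on the two coordinates commute yields
\[
\phi\circ\pro_K=\big(s_{m-1}^{P}\cdots s_1^{P},\ s_{m-1}^{Q}\cdots s_1^{Q}\big)\circ\phi=(\pro_K\times\pro_K)\circ\phi,
\]
so $\phi$ is equivariant.

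The one genuine subtlety, and the step I expect to be the crux, is that the toggle on $P\bdu Q$ enforces surjectivity of the whole poset whereas $s_{i-1}^{P}$ enforces surjectivity of $P$ alone, and a priori $Q$ could supply a label missing on the $P$ side. I would dispose of this by showing that in the minimal-packing regime $m=h+1$ surjectivity is never the binding constraint: if an element $x$ labelled $j$ is raisable in the increasing sense, meaning all upper covers are labelled $\ge j+2$, then $x$ cannot be the unique element labelled $j$, because every longest chain through $x$ carries the consecutive labels $1,2,\ldots,m$ and would force an upper cover of $x$ to bear $j+1$. Thus raising $x$ automatically preserves surjectivity, and symmetrically for lowering; consequently the toggles on $P\bdu Q$, on $P$, and on $Q$ are all governed purely by the increasing relation and agree under $\phi$.

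Finally I would invoke the elementary fact that, for a cyclic action on a product, the orbit of a pair $(L_P,L_Q)$ whose coordinates have periods $c_i$ and $d_j$ has size $\lcm(c_i,d_j)$. Running over the orbits $\cO^P_1,\ldots,\cO^P_k$ of $\cL_m(P)$ and $\cO^Q_1,\ldots,\cO^Q_l$ of $\cL_m(Q)$, equivariance shows that the block $\cO^P_i\times\cO^Q_j$ breaks into $\gcd(c_i,d_j)$ orbits of size $\lcm(c_i,d_j)$, so that each pair $(\cO^P_i,\cO^Q_j)$ contributes orbits of size $\lcm(c_i,d_j)$ to $\cL_{m+1}(P\bdu Q)$. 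This is exactly the content of the asserted multiset identity.
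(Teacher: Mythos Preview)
Your overall strategy matches the paper's: build the bijection $\phi(L)=(L|_P-1,\,L|_Q-1)$ from $\cL_{m+1}(P\bdu Q)$ to $\cL_m(P)\times\cL_m(Q)$ and show it is equivariant for $\pro_K$ versus the diagonal action, then read off orbit sizes as least common multiples. The only real difference is in how equivariance is verified. The paper works directly from the sliding definition of $\pro_K$: since the hypotheses force $L(\zh_P)=L(\zh_Q)=2$, removing the $1$ from $\zh$ and performing the first slide leaves both $\zh_P$ and $\zh_Q$ unlabeled simultaneously, after which the slides run independently inside $P$ and inside $Q$ exactly as they would on those posets alone. Because the sliding formulation never tests packedness mid-process, the surjectivity subtlety you carefully isolate simply does not arise there. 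Your toggle argument is correct, but you should make one implicit step explicit: when you say ``every longest chain through $x$ carries the consecutive labels $1,\ldots,m$,'' you need that some longest chain actually passes through $x$, and this follows precisely because $x$ is assumed to be the \emph{unique} element labelled $j$ and every longest chain (having $m$ elements and $m$ labels) must contain a $j$. Finally, your observation that each block $\cO_i^P\times\cO_j^Q$ decomposes into $\gcd(c_i,d_j)$ orbits of size $\lcm(c_i,d_j)$ is sharper than the paper's proof, which glosses over multiplicities in the same way the proposition's statement does.
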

\begin{proof}
Define a map 
$$
\phi:\cL_{m+1}(P\bdu Q)\ra\cL_m(P)\times\cL_m(Q)
$$
by sending $L\mapsto(L_P,L_Q)$ such that
\begin{align*}
L_P(x) & = L(x) -1\text{ if $x\in P$},\\
L_Q(y) & =  L(y) -1\text{ if $y\in Q$}.
\end{align*}
It is easy to see that the hypothesis $m=h(P)+1=h(Q)+1$ implies the map is well defined in that $L_P$ and $L_Q$ are $m$-packed labelings of their respective posets.  It is also a simple matter to construct an inverse for $\phi$, so the map is bijective.

Note also that we have the equivariance
$$
\phi\pro_K(L)=\pro_K\phi(L)
$$
where $\pro_K$ is applied component-wise to $\phi(L)$.  This is because the hypotheses on $m$ and $h$ force $L(\zh_P)=L(\zh_Q)=2$ for all $m$-packed labelings $L$.  Thus the action of $\pro_K$ on $P$ and $Q$ as subposets of $P\bdu Q$ is the same, up to subtracting $1$, as its action on $P$ and $Q$ themselves.  Clearly, if $L_P$ is in an orbit of size $c$ and $L_Q$ is in an orbit of size $d$ then $(P,Q)$ is in an orbit of size $\lcm(c,d)$.  The proposition now follows from the equivariance.
\end{proof}

Rowmotion is another action on posets which has received much attention.  Promotion and rowmotion are often connected for certain posets.  See, for example, the article of Striker and Williams~\cite{SW:pr}.  We wish to show that this is also true for $m$-packed labelings of certain posets $P$ when $m=h(P)+2$.  
In fact, in~\cite{DSV:ril} it is shown that there is an equivariant bijection between $K$-promotion on increasing labelings of any poset and rowmotion on a corresponding object which they call the gamma poset. In the case we are studying, the gamma poset is particularly simple.  
We first recall the definition of rowmotion.

A {\em (lower order) ideal} of $P$ is $I\sbe P$ such that $x\in I$ and 
$y\le x$ implies $y\in I$.  We let
$$
\cJ(P) = \{ I \mid \text{$I$ is an ideal of $P$}\}.
$$
If $Q\sbe P$ then the {\em ideal generated by $Q$} is
$$
Q\da\ = \{y \mid \text{$y\le x$ for some $x\in Q$}\}.
$$
The {\em rowmotion operator} is a map 
$\rho:\cJ(P)\ra\cJ(P)$
defined by 
$$
\rho(I)= Q\da
$$ 
where $Q$ is the set of minimal elements of $P-I$.
To connect $\rho$ with $\pro_K$, let $P$ be a poset with a $\zh$ such that all maximal chains of $P$ have length $h(P)$.
Note that this implies that $P$ is {\em ranked} in that, for any $x\in P$, all maximal chains from $\zh$ to $x$ have the same length.  This common length is called the {\em rank} of $x$ and denoted $\rk x$.
Consider the $m$-packed labelings of $P$ where $m =h(P)+2$.  So, every maximal chain of $P$ will be missing exactly one label.  It follows that if $L\in\cL_m(P)$ and $x\in P$ then either 
$L(x)=\rk x + 1$ or $L(x) = \rk x + 2$.  Furthermore
$$
I(L) =\{ x \mid L(x) =\rk x + 1\}
$$
is a lower ideal of $P$.   Then every $I\in\cJ(P)$ is of the form $I=I(L)$ for some $m$-packed labeling $L$ except those of the form
\beq
\label{equirk}
I = \{x \in P \mid \rk x\le k\}
\eeq
for some $k$ since these would come from a labeling where all maximal chains are missing the same label, and such a labeling is not $m$-packed.  Let
$$
\cJ'(P) = \{ I \mid \text{$I\in\cJ(P)$ but $I$ is not of the form~\eqref{equirk}}\}.
$$
It is not hard to see from what we have done above that the map $L\mapsto I(L)$ is a bijection
\beq
\label{piDef}
\pi:\cL_m(P)\ra\cJ'(P).
\eeq

Now consider the rowmotion operator $\rho$.  Note that the ideals in~\eqref{equirk} are all in an orbit of $\rho$.  So $\rho$ restricts to an action on $\cJ'(P)$. In fact, we will show that the map $\pi$ is an equivariant bijection between $K$-promotion and the inverse of rowmotion. To prove this, we will need to express our group actions in terms of toggles.

We first recall the toggle operator presentation of rowmotion on order ideals.  
    For a poset $P$ and $x\in P$, the {\em toggle} $t_x$ acts on $\cJ(P)$ via
    \[
        t_x(I)=
        \begin{cases}
            I\uplus\{x\}\quad&\mbox{if } x\notin I \mbox{ and } I\uplus\{x\}\in \cJ(P),\\
            I-\{x\}   \quad&\mbox{if } x\in I \mbox{ and } I-\{x\}\in \cJ(P),\\
            I         \quad&\mbox{otherwise.}\\
        \end{cases}
    \]
A {\em linear extension} of $P$ is a list of $P$'s elements $x_1,\ldots,x_n$ such that $x_i<_P x_j$ implies $i<j$,

\begin{thm}[{\cite{CF:oar}}]
    \label{RowTogThm}
    The action of $\rho$ on $\cJ(P)$ is equal  to the product of toggles $\{t_x\mid x\in P\}$ in reverse order of any linear extension of $P$.\hqed
\end{thm}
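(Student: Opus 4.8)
The plan is to verify directly that, for every ideal $I\in\cJ(P)$, applying the toggles one at a time in the order prescribed by the theorem produces $\rho(I)$. Fix a linear extension $x_1,\ldots,x_n$ of $P$ and apply the toggles in the reverse order $t_{x_n},t_{x_{n-1}},\ldots,t_{x_1}$, so that maximal elements are processed first and minimal elements last. Each intermediate set is an ideal by the very definition of $t_x$, so the composite is a well-defined map $\cJ(P)\ra\cJ(P)$; it remains to identify it with $\rho$. Throughout I would use the membership characterization of rowmotion: since $\rho(I)$ is the ideal generated by the minimal elements of $P-I$, an element $x$ lies in $\rho(I)$ if and only if $x\le z$ for some minimal element $z$ of $P-I$.

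The first key step is a snapshot observation. Because we toggle in reverse linear extension order, at the instant $t_x$ is about to be applied, every element strictly above $x$ has already been toggled exactly once, while $x$ itself and every element strictly below $x$ have not yet been touched and hence still record their original membership in $I$. Combined with the inductive hypothesis below, this means that at this instant the elements above $x$ already sit in their final ($\rho(I)$) state, whereas the elements $\le x$ are in their $I$-state.

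The second step is a downward induction along the reverse order showing that, immediately after $t_x$ is applied, the membership of $x$ in the current set agrees with its membership in $\rho(I)$; since $x$ is never toggled again, this membership is then permanent. There are four cases. If $x\in I$ and some element above $x$ already lies in $\rho(I)$, then that element is present in the current set, so $t_x$ cannot delete $x$ and $x$ is retained; and indeed $x\in\rho(I)$ because $\rho(I)$ is an ideal. If $x\in I$ but no element above $x$ lies in $\rho(I)$, then $t_x$ removes $x$; and one checks $x\notin\rho(I)$, for any minimal $z\in P-I$ with $x\le z$ would satisfy $z>x$ and $z\in\rho(I)$, contradicting the case hypothesis. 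If $x\notin I$ and $x$ is a minimal element of $P-I$, then all elements below $x$ lie in $I$, hence in the current set, so $t_x$ adds $x$; and $x\in\rho(I)$ as a generator. Finally, if $x\notin I$ but $x$ is not minimal in $P-I$, some element below $x$ is absent from the current set, so $t_x$ cannot add $x$; and $x\notin\rho(I)$, since minimality of any $z\ge x$ in $P-I$ would force $z=x$. Assembling the four cases shows the final set equals $\rho(I)$, as claimed. The convention that functions compose right to left, together with the top-to-bottom processing order, is exactly what makes this sequence equal to $\rho$ rather than to $\rho^{-1}$.

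The main obstacle is the bookkeeping in the second step: each local toggle decision (``is the result still an ideal?'') must be translated into the global statement about membership in $\rho(I)$, and this translation is legitimate only because the snapshot observation guarantees that the elements below $x$ still carry their $I$-values while the elements above carry their final values. Getting the induction to close therefore hinges on processing the elements in an order compatible with the poset, i.e.\ a reversed linear extension; any other order would break the snapshot and invalidate the case analysis. The degenerate subcases where $x$ is minimal or maximal are handled by the same argument with the relevant ``above'' or ``below'' set empty.
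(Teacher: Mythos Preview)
The paper does not supply its own proof of this theorem: it is quoted from Cameron and Fon-Der-Flaass and marked with \verb|\hqed|, i.e.\ stated without argument. Your proposal is a correct direct verification and is essentially the standard proof of this fact. The snapshot observation is the crucial point, and you have it right: when $t_x$ is about to act, the elements strictly above $x$ have already been toggled into their $\rho(I)$-state while those strictly below $x$ retain their $I$-state, and since the effect of $t_x$ depends only on the membership of elements comparable to $x$, the four-case analysis goes through. One small clarification worth making explicit is that incomparable elements may lie on either side of $x$ in the linear extension, but they are irrelevant to whether $t_x$ changes $x$; you implicitly use this when invoking the snapshot. Otherwise the argument is complete.
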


Next, we explore how $\pi$ commutes with toggles.
\begin{lem}[{\cite[Lemma 4.32]{DSV:ril}}]
    \label{ProRowTog}
    Let $P$ be a poset with $\zh$ such that all maximal chains have the same length, and set $m=h(P)+2$. Then for any $L\in \cL_m(P)$ and $i<m$,
    \[\pi (s_i(L)) = \left(\prod_{\rk z=i-1}t_z\right)\pi(L).\]
\end{lem}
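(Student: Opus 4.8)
The plan is to prove the identity by comparing, element by element, membership in the two ideals $I(s_i(L))$ and $\bigl(\prod_{\rk z=i-1}t_z\bigr)(I(L))$. Everything rests on the rank dichotomy established just before the lemma: for every $x\in P$ one has $L(x)\in\{\rk x+1,\ \rk x+2\}$, and $x\in I(L)$ precisely when $L(x)=\rk x+1$. So it suffices to show that the two operations agree on which elements lie in the output ideal.

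First I would show that $s_i$ can only alter the label of an element of rank $i-1$. Since $s_i$ touches $x$ only when $L(x)\in\{i,i+1\}$, the rank bound pins down the rest: if $L(x)=i$ and $\rk x=i-2$ then $L(x)=\rk x+2$ is already maximal, so raising it to $i+1$ would force the label beyond $\rk x+2$ along an upward maximal chain and is forbidden; dually, if $L(x)=i+1$ and $\rk x=i$ then $L(x)=\rk x+1$ is minimal and cannot be lowered. The only mobile elements are those of rank $i-1$. In parallel, each $t_z$ with $\rk z=i-1$ changes membership of $z$ alone, and because the elements of a fixed rank form an antichain these toggles pairwise commute; moreover whether $t_z$ fires depends only on the covers and down-covers of $z$, which sit at ranks $i$ and $i-2$ and are untouched by the other rank-$(i-1)$ toggles. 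Thus the rank-$(i-1)$ toggles act independently and simultaneously, mirroring the simultaneous swap carried out by $s_i$.

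Next I would match the firing conditions at a fixed $z$ with $\rk z=i-1$. If $z\in I(L)$ then $L(z)=i$; every cover of $z$ lies at rank $i$, and any cover in $I(L)$ carries label $i+1$ and is pinned there, so $s_i$ raises $z$ to $i+1$ exactly when no cover of $z$ lies in $I(L)$, i.e. exactly when $z$ is maximal in $I(L)$, which is precisely when $t_z$ deletes $z$. Symmetrically, if $z\notin I(L)$ then $L(z)=i+1$, and $s_i$ lowers $z$ to $i$ exactly when no element covered by $z$ lies outside $I(L)$, i.e. exactly when $z$ is minimal in $P-I(L)$, which is precisely when $t_z$ inserts $z$. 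Since neither side changes membership of any element of rank $\ne i-1$, the two resulting ideals agree everywhere, giving $\pi(s_i(L))=\bigl(\prod_{\rk z=i-1}t_z\bigr)\pi(L)$.

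The step I expect to require the most care is checking that the surjectivity clause ``$\ldots$ still in $\cL_m(P)$'' in the definition of $s_i$ never independently blocks a swap that the increasing condition already permits, so that the equivalences above are genuine. I would argue by contradiction: if applying $s_i$ destroyed the label $i$, then no rank-$(i-2)$ element would lie outside $I(L)$, every maximal element of $I(L)$ at rank $i-1$ would be raised, and no rank-$(i-1)$ element outside $I(L)$ would be lowered. Propagating these constraints through the rank structure, and using that every non-maximal element is covered by one of the next rank, forces $I(L)=\{x:\rk x\le i-1\}$, contradicting $I(L)\in\cJ'(P)$; a dual computation rules out losing the label $i+1$. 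This is exactly why the rank slabs in~\eqref{equirk} were excluded from $\cJ'(P)$, and it is the crux that lets $\pi$ intertwine $s_i$ with the rank-$(i-1)$ toggles.
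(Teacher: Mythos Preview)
Your argument is correct and follows essentially the same route as the paper: both reduce to checking, element by element, that membership in the two ideals agrees, with the only nontrivial cases occurring at rank $i-1$. The paper organizes this as a four-case split on the pair $(L(x),\rk x)$, whereas you first isolate rank $i-1$ as the only active rank and then match the toggle firing conditions there; the content is the same. Your final paragraph on the surjectivity clause is additional care the paper omits---its case analysis implicitly treats $s_i$ as governed by the increasing condition alone---so your observation that a surjectivity failure would force $I(L)$ into the excluded rank-slab form~\eqref{equirk} is a welcome tightening rather than a different method.
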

\begin{proof}
 Let 
    \[A=\pi(s_i(L)) \quad \mbox{and}\quad B=\left(\prod_{\rk(z)=i-1}t_z\right)\pi(L).\]
    We wish to show $A=B$.  Note that the only $x\in P$ whose labels can change when applying $s_i$ are those with $L(x)=i$ or $i+1$. 
    Also, the choice of $m$ implies that $\rk(x) = L(x)-1$  or $L(x)-2$. 
    So, the proof breaks into four cases depending on label and rank.

    \emph{Case 1: $L(x)=i$ and $\rk(x)=i-1$.}\\
    First suppose that there exists $y$ covering $x$ with $L(y)=i+1$. Then $x,y\in \pi(L)$, so $x\in t_x(\pi(L))$. Consequently $x\in B$. On the other hand, both $s_iL(x)=i$ and $s_i L(y) = i+1$ still hold.  So $x\in A$ as well. 

    Now, suppose there is no such $y$. Then every $y$ covering $x$ must have label $i+2$, so $x\in \pi(L)$ is maximal. Consequently $x\notin t_x(\pi(L))$, so $x\notin B$. On the other hand, \[s_iL(x)=i+1>\rk(x)+1,\] 
    so $x\notin A$ as well.
    
    \emph{Case 2: $L(x)=i$ and $\rk(x)=i-2$.}\\
    Since $L(x)>\rk(x)+1$, it follows that $x\notin \pi(L)$. The toggles applied in $B$ are to a different rank set in $P$, so also $x\notin B$. By the choice of $m$ and the fact that $i<m$ we have that $x$ is not maximal and every $y$ covering $x$ has $L(y)=i+1$, so $s_iL(x)=i$. Consequently, $x\notin A$ also.
    
    \emph{Case 3: $L(x)=i+1$ and $\rk(x)=i$.}\\
    This case is easy to check if $i=0$, so assume $i>0$.
    Since $L(x)=\rk(x)+1$, it follows that $x\in \pi(L)$. By assumption, $x$ lies on a maximal chain of the uniform length and $\rk(x)>0$, so $x$ covers at least one element and all such elements have  label $i$. Thus $x\in A$. On the other hand, the toggles in $B$ are acting on a different rank, so $x\in B$ as well.
    
    \emph{Case 4: $L(x)=i+1$ and $\rk(x)=i-1$.}\\
    First suppose that there exists $y$ covered by $x$ with $L(y)=i$. Then $x,y\notin \pi(L)$, so $x\notin t_x(\pi(L))$. Consequently $x\notin B$. On the other hand, we still have $s_i(L)(x)=i+1$, so $x\notin A$ as well. 

    Now, suppose there is no such $y$. Then every $y$ covered by $x$ must have label $i-1$, and so lies in $\pi(L)$. But $x\notin\pi(L)$, so $x$ is minimal among elements not in $\pi(L)$. Consequently $x\in t_x(\pi(L))$, so $x\in B$. On the other hand, $s_iL(x)=i=\rk(x)+1$,
    so $x\in A$ too.
\end{proof}

We now have everything in place to prove the desired equivariance.
\begin{thm}[{\cite[Theorem 4.21]{DSV:ril}}]
\label{ProRhoEq}
Let $P$ be a poset with $\zh$ such that all maximal chains have the same length. If $m=h(P)+2$ 
then the map $\pi$ in~\eqref{piDef} is an equivariant bijection, showing that
$$
(\pro_K,\cL_m(P)) \equiv (\rho^{-1},\cJ'(P)).
$$
\end{thm}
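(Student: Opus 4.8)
The plan is to verify the single equivariance relation $\pi\circ\pro_K=\rho^{-1}\circ\pi$ directly. Since $\pi$ is already known to be a bijection $\cL_m(P)\to\cJ'(P)$ by~\eqref{piDef}, and since $\rho$ (hence $\rho^{-1}$) restricts to $\cJ'(P)$ as noted in the paragraph preceding the statement, establishing this relation is exactly condition~\eqref{equi:def} with $g=\pro_K$, $h=\rho^{-1}$, and $\phi=\pi$. The equivariance $(\pro_K,\cL_m(P))\equiv(\rho^{-1},\cJ'(P))$ then follows immediately.

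First I would expand $\pro_K$ using its toggle presentation from Theorem~\ref{s_iThm}, writing $\pro_K(L)=s_{m-1}s_{m-2}\cdots s_1(L)$. Since each promotion toggle $s_i$ maps $\cL_m(P)$ into itself, I can apply Lemma~\ref{ProRowTog} repeatedly, peeling off one $s_i$ at a time from the inside out. This gives
\[
\pi(\pro_K(L))=\left(\prod_{\rk z=m-2}t_z\right)\left(\prod_{\rk z=m-3}t_z\right)\cdots\left(\prod_{\rk z=0}t_z\right)\pi(L),
\]
a telescoping computation in which applying $s_i$ contributes the block of rank-$(i-1)$ toggles. Here I would note that within each rank the elements are pairwise incomparable, so the corresponding toggles commute and each inner product $\prod_{\rk z=j}t_z$ is unambiguous.

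Next I would identify this product with $\rho^{-1}$. Because $P$ is ranked with a unique minimum and all maximal chains share the common length $h(P)=m-2$, the ranks occurring are exactly $0,1,\ldots,m-2$, so every element of $P$ appears in exactly one block and the displayed operator is a product of all toggles $\{t_z:z\in P\}$, each used once. Reading right to left (our composition convention), the toggles are applied in weakly increasing order of rank; since $x<_P y$ forces $\rk x<\rk y$, listing the elements by increasing rank, breaking ties within a rank arbitrarily (harmless by the commutation above), is a linear extension of $P$. By Theorem~\ref{RowTogThm}, toggling in the reverse of a linear extension yields $\rho$. Our operator toggles in the linear-extension order itself, which is the reverse of the reverse, and since reversing a product of involutions inverts it, this operator equals $\rho^{-1}$. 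Hence $\pi(\pro_K(L))=\rho^{-1}(\pi(L))$ for all $L$, completing the proof.

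The step I expect to be the main obstacle is the bookkeeping in this last paragraph: orienting the argument so that the product lands on $\rho^{-1}$ rather than $\rho$, and justifying that the rank-grouped product legitimately matches a single linear-extension toggle sweep. The commutation of same-rank toggles and the identification of increasing rank order with a linear extension are the technical facts that make this rigorous, and I would state them explicitly rather than leave them implicit.
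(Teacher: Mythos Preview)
Your proposal is correct and follows essentially the same route as the paper: expand $\pro_K$ via Theorem~\ref{ProTogThm}, apply Lemma~\ref{ProRowTog} repeatedly to pull $\pi$ through the $s_i$'s, and identify the resulting rank-grouped toggle product with $\rho^{-1}$ using Theorem~\ref{RowTogThm}. The paper's version is terser (it simply writes $\rho^{-1}(I)=t_n t_{n-1}\cdots t_1(I)$ for a rank-ordered linear extension and appeals directly to the lemma), whereas you spell out the commutation of same-rank toggles and the inversion argument, but the substance is identical.
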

\begin{proof}
We have already noted that $\pi$ is a bijection.  So we just need to show that for any $L\in\cL_m(P)$ we have
$$
\pi\circ\pro_K(L) = \rho^{-1}\circ\pi(L).
$$
Let the elements of  $P$  be naturally labeled with $\{1,2,\ldots,n\}$ along ranks, from lowest to highest. In particular we have that $1,2,\ldots,n$ is a linear extension. By Theorem \ref{RowTogThm} and Theorem \ref{ProTogThm}, the actions of $K$-promotion and inverse rowmotion are given by
$$
\pro_K(L) = s_{m-1}s_{m-2}\cdots s_1(L) \qquad\mbox{and}\qquad \rho^{-1}(I)=t_nt_{n-1}\cdots t_1(I)
$$
where we can take $I=\pi(L)$.
Now commutativity follows  from repeated applications of Lemma \ref{ProRowTog}.
\end{proof}

\section{Extended stars}
\label{es}

In graph theory, a {\em star} is the complete bipartite graph $K_{1,n}$.  For us, 
an {\em extended star}, $S$, will be  a  poset with $\zh$ such that $S-\zh$ is a disjoint union of chains.  Recall that such chains are called branches.  We write 
$S=S(b_1,b_2,\ldots,b_k)$ if $S$ has $k$ branches with the $i$th branch $B_i$ having $b_i$ elements.  Interestingly, it follows from the next result that the order of $K$-promotion on $\cL_m(S)$  is almost always $m-1$ and so independent of $\#S$ in such cases.
\begin{thm}
\label{AllSta}
Let $S=S(b_1,b_2,\ldots,b_k)$ and $m$ be such that $S$ has an $m$-packed labeling.  Then
$$
o(\pro_K) =
\case{1}{if $b_i=m-1$ for all $i$,}{m-1}{else.}
$$
\end{thm}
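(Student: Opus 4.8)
The plan is to reduce an $m$-packed labeling of $S$ to one $b_i$-subset of $[2,m]$ per branch, and then read off $o(\pro_K)$ from the rotation description established in the proof of Theorem~\ref{prin}. First I would record the normal form of a labeling $L\in\cL_m(S)$. Since $\zh$ is the unique minimum it must carry the label $1$, and since each branch $B_i$ is a chain its labels form a strictly increasing sequence drawn from $[2,m]$; writing $S_i\sbe[2,m]$ for this set of labels, the labeling of $B_i$ is completely determined by $S_i$ (smallest label at the bottom). Surjectivity of $L$ is exactly the condition $\bigcup_i S_i=[2,m]$. Thus $L\mapsto(S_1,\ldots,S_k)$ identifies $\cL_m(S)$ with the tuples of subsets $S_i\sbe[2,m]$ having $\#S_i=b_i$ and covering $[2,m]$.

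Next I would prove the upper bound $o(\pro_K)\mid m-1$. The circle/box picture in the proof of Theorem~\ref{prin} describes the effect of $\pro_K$ on the labels of a single branch: writing $2,3,\ldots,m$ around a circle, the occupied positions rotate one step, so on each $S_i$ the operator acts by the fixed cyclic permutation $r$ of $[2,m]$ with $r(j)=j-1$ for $j>2$ and $r(2)=m$. The point to verify is that this rotation is intrinsic to the branch, i.e.\ the new set $S_i'$ depends only on $S_i$ and not on the labels of the other branches. Granting this, $\pro_K$ acts on $(S_1,\ldots,S_k)$ by applying $r$ coordinatewise; since $r$ is a single $(m-1)$-cycle we get $\pro_K^{\,m-1}=\mathrm{id}$, hence $o(\pro_K)\mid m-1$.

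Finally I would split into the two cases. If $b_i=m-1$ for every $i$, then because $\#[2,m]=m-1$ each $S_i$ is forced to equal all of $[2,m]$, so $\cL_m(S)$ is a single labeling and $o(\pro_K)=1$. Otherwise some branch has $1\le b_j\le m-2$, so Theorem~\ref{prin}(a) applies to $B_j$ and gives $m-1\mid o(\pro_K)$; combined with $o(\pro_K)\mid m-1$ from the previous step this forces $o(\pro_K)=m-1$, as claimed.

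The main obstacle is the intrinsic-rotation claim in the middle step, since the branches share the element $\zh$ and $K$-promotion merges labels into it. I expect the cleanest route is to run the sliding algorithm directly on $S$: the shared bottom $\zh$ simply absorbs all the $2$'s at once and becomes $2$, after which each branch processes its own chain upward in isolation. On a branch that contains the label $2$, the resulting gap propagates to the top and the vacated maximal vertex is refilled with $m$, so after the final decrement $S_i$ becomes $r(S_i)$; on a branch that omits $2$, nothing slides and the decrement sends each label $j$ to $j-1$, again $r(S_i)$. This confirms the decoupling and that the common per-branch action is $r$. If one prefers not to re-derive this, the same conclusion is already contained in the branch analysis of Theorem~\ref{prin} applied to each $B_i$ in turn.
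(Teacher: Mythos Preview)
Your proposal is correct and follows essentially the same approach as the paper: both arguments use the rotation description from Theorem~\ref{prin} to show that each branch returns after $m-1$ steps, giving $o(\pro_K)\mid m-1$, and then invoke Theorem~\ref{prin}(a) for the reverse divisibility when some $b_i\le m-2$. Your treatment is somewhat more explicit---you spell out the encoding of labelings by subset tuples and justify the branch-independence of the rotation (which the paper simply asserts)---but this is elaboration rather than a different method.
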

\begin{proof}
If $b_i=m-1$ for all $i$, then there is only one $m$-packed labeling of $S$ and the result follows.

If there is a branch with $b_i\neq m-1$ then, by Proposition~\ref{ht}, we must have $b_i\le m-2$.  So Theorem~\ref{prin} (a) applies and $m-1\mid o(\pro_K)$.  To show that we have equality note that, as in the proof of Theorem~\ref{prin} (a), the induced action of $\pro_K$ on each branch $B_j$ is just rotation of $b_j$ labels around a circle of $m-1$ elements. Clearly, after $m-1$ rotations the labels on $B_j$ will have returned to their original state.  So $m-1= o(\pro_K)$.
\end{proof}

The next result will illustrate the use of Theorem~\ref{ProRhoEq}. 
To state part of it, we need to review some definitions about group actions and statistics.  Let $S$ be a finite set.  A {\em statistic} on $S$ is a function
$$
\st:S\ra\bbN.
$$
We can apply $\st$ to subsets $T\sbe S$ by letting
$$
\st T = \sum_{t\in T} \st t.
$$
Now suppose $G$ is a finite group acting on $S$.  We call $\st$ {\em homomesic} if there is a constant $c$ such that for any orbit $\cO$ of the action
\beq
\label{HomEq}
\frac{\st\cO}{\#\cO}=c.
\eeq
Otherwise put, the average value of $\st$ is the same over all orbits.  Homomesy was first defined by Propp and Roby~\cite{PR:hpt} and has since become a much-studied property in dynamical algebraic combinatorics.

Let $P$ be a ranked poset and let $L$ be an $m$-packed labeling of $P$.  Call $x\in P$ {\em minimally labeled} if $L(x)=\rk x + 1$.  Note that such $x$ are exactly the ones whose label is as small as possible given the fact that $L$ is increasing on $P$.  Let
$$
\cM(L) = \#\{ x\in P \mid \text{$x$ is minimally labeled in $L$}\}.
$$ 
The following properties were proven for rowmotion on extended stars in the paper of Dang\-wal et al.~\cite[Theorem 3.1]{DKLLSS:rrt} by employing a tiling model.  So, using Theorem~\ref{ProRhoEq}, we immediately get the following facts about $K$-promotion on such stars where all $k$ branches have the the same number of elements $b$, denoted by $b^k$.
\begin{cor}
\label{bSta}
Consider the star $S=S(b^k)$ and let $m = b+2$.  Then the orbits of $\pro_K$ acting in $\cL_m(S)$ satisfy the following.
\ben
\item[(a)]  Every orbit $\cO$ has $\#\cO=b+1$.
\item[(b)]  The number of orbits is $(b+1)^{k-1}-1$.
\item[(c)]  For any orbit $\cO$ we have
$$
\cM(\cO) = b+1 + k\binom{b+1}{2}.
$$
It follows that the statistic $\cM$ is homomesic.\hqed
\een
\end{cor}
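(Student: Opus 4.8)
The plan is to reduce everything to rowmotion on order ideals via the equivariance of Theorem~\ref{ProRhoEq}, and then import the corresponding facts for rowmotion on extended stars from~\cite{DKLLSS:rrt}. First I would check that $S=S(b^k)$ meets the hypotheses of Theorem~\ref{ProRhoEq}: it has a unique minimum $\zh$, and since every branch is a chain on $b$ vertices, every maximal chain of $S$ has the form $\zh\lt c_{i,1}\lt\cdots\lt c_{i,b}$ and hence the common length $b=h(S)$. So with $m=b+2=h(S)+2$ the theorem applies and yields
$$(\pro_K,\cL_m(S)) \equiv (\rho^{-1},\cJ'(S)).$$
Since equivariant actions share the same orbit structure, and since a bijection and its inverse have exactly the same orbits, the multiset of orbit sizes of $\pro_K$ on $\cL_m(S)$ equals that of $\rho$ on $\cJ'(S)$. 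This is what lets me transport parts (a) and (b) from the rowmotion computation in~\cite[Theorem 3.1]{DKLLSS:rrt}.

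Next I would translate the statistic appearing in part (c). Recall that $\pi(L)=I(L)=\{x : L(x)=\rk x+1\}$ is precisely the set of minimally labeled elements of $L$, so $\cM(L)=\#\pi(L)$; that is, under the bijection $\pi$ the statistic $\cM$ corresponds exactly to ideal cardinality $\#(\cdot)$ on $\cJ'(S)$. Because $\pi$ is equivariant, it carries each $\pro_K$-orbit $\cO$ to a rowmotion orbit $\pi(\cO)$ of the same size, and
$$\cM(\cO)=\sum_{L\in\cO}\#\pi(L)=\sum_{I\in\pi(\cO)}\#I.$$
Hence the homomesy of $\cM$ under $\pro_K$ is equivalent to the homomesy of ideal-cardinality under rowmotion, and the homomesy constant transfers verbatim. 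Combining the orbit size $b+1$ from part (a) with the cardinality average $1+\tfrac{kb}{2}$ coming from~\cite{DKLLSS:rrt} then gives
$$\cM(\cO)=(b+1)\left(1+\tfrac{kb}{2}\right)=b+1+k\binom{b+1}{2},$$
which is exactly part (c), with the constancy of the ratio $\cM(\cO)/\#\cO$ being the asserted homomesy.

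The step I expect to require the most care is the bookkeeping between $\cJ'(S)$ and the full ideal lattice $\cJ(S)$ on which~\cite{DKLLSS:rrt} operates. One must verify that the ideals excluded from $\cJ'(S)$, namely the rank-uniform ones of the form~\eqref{equirk}, constitute a single rowmotion orbit — the orbit cycling through $\emptyset,\{\zh\},\ldots,S$ — so that deleting them leaves the remaining size-$(b+1)$ orbits intact and does not perturb the orbit-size claim (a). One must also confirm that the homomesy constant quoted for (c) is the one computed over $\cJ'(S)$ rather than over all of $\cJ(S)$, since ideal-cardinality is \emph{not} homomesic across the discarded rank-uniform orbit; the equivariance of Theorem~\ref{ProRhoEq} sees only $\cJ'(S)$, so it is precisely the restricted computation that transfers to $\pro_K$.
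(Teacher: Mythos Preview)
Your proposal is correct and follows exactly the approach the paper takes: the corollary is stated with no explicit proof beyond the remark that the rowmotion results of \cite[Theorem 3.1]{DKLLSS:rrt} transfer to $\pro_K$ via Theorem~\ref{ProRhoEq}. You have simply fleshed out the details the paper leaves implicit, including the identification $\cM(L)=\#\pi(L)$ and the $\cJ'(S)$ versus $\cJ(S)$ bookkeeping.
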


\section{Combs and Zippers}
\label{cz}

The {\em comb}, $C_n$, is formed from a chain with $n$ elements by adding a new maximal element attached to each of the vertices of the chain.  We call the original chain the {\em spine} of the comb.  A labeling of the comb $C_3$ is displayed on the left in Figure~\ref{L^(3)}.  Combs appear in various contexts such as a tree analogue of the Robinson-Schensted-Knuth correspondence for Young tableaux~\cite{SY:pat,sta:fl}.  We also define the {\em zipper poset}
$$
Z_n = C_n \bdu C_n.
$$

We begin with a simple observation about the orbit sizes.
\begin{prop}
If $n+1\le m\le 2n$  and $\cO$ is any orbit of $\pro_K$ on $\cL_m(C_n)$ then
     $$
     (m-1) \mid \#\cO.
     $$
\end{prop}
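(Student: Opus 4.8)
The plan is to deduce the statement directly from Theorem~\ref{prin}(b). The essential observation is that every comb possesses a branch on a single vertex. Writing the spine of $C_n$ as $\zh=z_1\lt z_2\lt\cdots\lt z_n$ with a tooth $t_i$ attached to each $z_i$, deleting the minimum $\zh=z_1$ leaves $t_1$ with no relations, so $\{t_1\}$ is a connected component of $C_n-\zh$ that is a chain on one vertex. Hence $C_n$ satisfies the hypothesis of Theorem~\ref{prin} with $k=1$.

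Granting this, the result is almost immediate. Since $\gcd(1,m-1)=1$ for every $m$, Theorem~\ref{prin}(b) applies with $k=1$ and yields $(m-1)\mid\#\cO$ for every orbit $\cO$, provided the side condition $1\le k\le m-2$ holds, i.e.\ provided $m\ge 3$. For $n\ge 2$ this is automatic, since $m\ge n+1\ge 3$, so the argument covers all such combs uniformly.

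The only remaining case is $n=1$, where the constraint $n+1\le m\le 2n$ collapses to $m=2$. There $m-1=1$ divides $\#\cO$ trivially, so the statement holds in this boundary case as well.

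I do not anticipate any genuine obstacle. Once the single-vertex branch is identified, everything reduces to the coprimality $\gcd(1,m-1)=1$ together with the degenerate case $n=1$. The one point requiring a moment's care is checking that it is exactly the bottom tooth $t_1$ that separates off as its own component of $C_n-\zh$; the remaining teeth $t_2,\ldots,t_n$ stay attached to the surviving portion of the spine and so do not form branches. This single-vertex branch is all that is needed.
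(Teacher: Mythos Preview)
Your proof is correct and follows exactly the same approach as the paper: both observe that $C_n$ has a branch with a single element and then invoke Theorem~\ref{prin}(b). Your version is more explicit in verifying the side condition $k\le m-2$ and handling the degenerate case $n=1$, but the argument is the same.
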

\begin{proof}
Note that $C_n$ has a branch with one element.  So this result follows immediately from Theorem~\ref{prin} (b).
\end{proof}

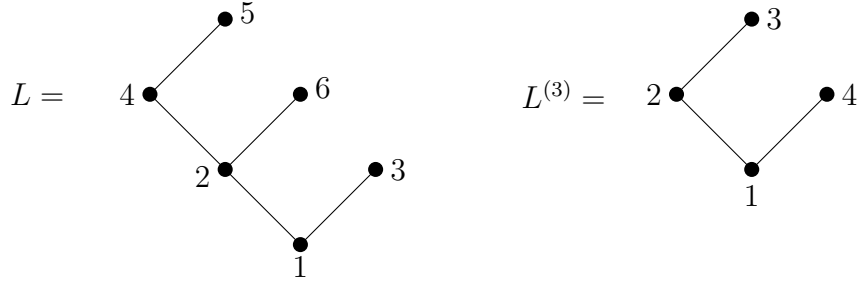
\begin{figure}
\centering
\begin{tikzpicture}
\fill(0,0) circle(.1);
\draw(0,-.3) node{$1$};
\fill(-1,1) circle(.1);
\draw(-1.3,.9) node{$2$};
\fill(1,1) circle(.1);
\draw(1.3,1) node{$3$};
\fill(-2,2) circle(.1);
\draw(-2.3,2) node{$4$};
\fill(0,2) circle(.1);
\draw(.3,2.08) node{$6$};
\fill(-1,3) circle(.1);
\draw(-.7,3.08) node{$5$};
\draw (1,1)--(0,0)--(-2,2)--(-1,3) (-1,1)--(0,2);
\draw(-3.5,2) node{$L=$};
\begin{scope}[shift={(6,1)}]
\fill(0,0) circle(.1);
\draw(0,-.35) node{$1$};
\fill(-1,1) circle(.1);
\draw(-1.3,1) node{$2$};
\fill(1,1) circle(.1);
\draw(1.3,1) node{$4$};
\fill(0,2) circle(.1);
\draw(.3,2) node{$3$};
\draw (1,1)--(0,0)--(-1,1)--(0,2);
\draw(-2.5,1) node{$L^{(3)}=$};
\end{scope}
\end{tikzpicture}
    \caption{A labeling $L\in\cL_6(C_3)$ and the induced labeling $L^{(3)}\in\cL_4(C_2)$}
    \label{L^(3)}
\end{figure}

If $m=2n=\# C_n$, its maximum value, much more can be said.
For any odd integer $2n+1$ we define the corresponding {\em double factorial}
$$
(2n+1)!! = (2n+1)(2n-1)(2n-3)\cdots 1.
$$
We will also need a formula for the number of $(\#T)$-packed labelings of a tree $T$.
An $m$-packed labeling with $m=\#T$ is call a {\em standard labeling} of $T$ and we let
\beq
\label{f^T}
f^T = \text{ the number of standard labelings of $T$}.
\eeq
The {\em hook length} of  $x\in T$ is
$$
h_x = \#\{ y \mid y\ge x\},
$$
that is, the size of the upper order ideal generated by $x$.  The following result is known as the Hook Length Formula for Trees and is due to Knuth~\cite{knu:acp3}
\begin{equation}
\label{hook}
    f^T = \frac{n!}{\prod_{x\in T} h_x}
\end{equation}
where $n=\#T$.  We now return to the case of the comb.
\begin{thm}
\label{C:2n}
Every orbit of $\pro_K$ on $\cL_{2n}(C_n)$ has  the same size.  And for any such orbit $\cO$ we have
$$
\#\cO \mid (2n-1)!!.
$$
\end{thm}
\begin{proof}
 We induct on $n$ where the result is easy to check for $n\le2$.  So, assume the result for $n$ and suppose $L\in\cL_{2n+2}(C_{n+1})$.   Let $y$ be the maximal element of $C_{n+1}$ covering $\zh$ and let $L(y)=l$.  From $L$ we can construct a labeling
  of $C_n$ by first removing $\zh$ and $y$ and then replacing the remaining 
  labels $\{2,\ldots,l-1,l+1,\ldots, 2n+2\}$ with the labels $[2n]$ in an order-preserving way.  The resulting labeling will be denoted $L^{(l)}$ and will be called the {\em restriction} of $L$.  An example can be found in Figure~\ref{L^(3)}.

 Now consider any orbit $\cO$ for the action on $\cL_{2n+2}(C_{n+1})$.  From Theorem~\ref{prin} (b) we have that $(2n+1)|\#\cO$.  And, as in the proof of that theorem, if $L\in\cO$ with $L(y)=l$ then the successive labels of $y$ in $\cO$ are $l,l-1,l-2,\ldots$ where these numbers are taken modulo $2n+1$ using representatives in the interval $[2,2n+2]$.  So we can pick, without loss of generality, $\cO$ to start with a labeling $L_1$ such that $L_1(y) = 2n+2$.  It will also be convenient to list the labelings in $r$ rows of length $2n+1$ so that 
 \beq\label{Orn}
 \#O = r(2n+1).
\eeq

 Now consider the first row of $\cO$ with labelings $L_1,L_2,\ldots,L_{2n+1}$ in order.
 Let $M_1=L_1^{(2n+2)}$ and consider the orbit $\cP\sbe\cL_{2n}(C_n)$ containing $M_1$. By induction, we can write as $\cP=(M_1,M_2,\ldots,M_{N})$ where 
 $N\mid (2n-1)!!$.  We claim that for all $1\le i\le 2n+1$ we have
 \beq\label{LiMi}
 L_i^{(2n+3-i)} = M_i.
 \eeq
 We induct on $i$ where the case $i=1$ is true by assumption.  Now suppose $1\le i\le 2n$ and that the desired equation holds for $L_i$.  By the bounds on $i$, we have  $L_i(z)=2$, where $z$ is the element of the spine covering $\zh$.
 So, computing $\pro_K L_i$ begins by moving the $2$ on $z$ down to $\zh$.  And from there on out, the path of elements  moved is the same as in computing $\pro_K M_i$ by the induction assumption and the fact that these moves only depend on the relative sizes of the elements.  It follows that equation~\eqref{LiMi} continues to hold at $i+1$.

 Now consider what happens when moving from $L_{2n+1}$ to $L_{2n+2}$ in the next row.  We claim that
 $$
 L_{2n+2}^{(2n+2)} = M_{2n+1}.
 $$
 Indeed, in $L_{2n+1}$ we have $L_{2n+1}(y) = 2$.  So applying $\pro_K$ just moves the $2$ from $y$ down to $\zh$ without disturbing any of the elements of $C_n-\{\zh,y\}$.
 So $L_{2n+2}^{(2n+2)}=L_{2n+1}^{(2)}=M_{2n+1}$ by~\eqref{LiMi}.

 By the same considerations, the restrictions of the labelings in the second row will given by the labelings
 $M_{2n+1}, M_{2n+2},\ldots,M_{4n+1}$ where the subscripts are taking modulo $N=\#\cP$.  Then the third row restrictions will start $M_{4n+1},M_{4n+2},\ldots$ and so forth.
 It follows that if we consider the restrictions of the elements in $\cO$ but ignore the last restriction in each row of $2n+1$ elements, then the result will be a concatenation of copies of the orbit $\cP$.  Recalling our notation $r$ for the number of rows, it follows that $2nr$ is a multiple of $\#\cP=N$.  In fact, $r$ must be the smallest positive integer such that $N\mid 2nr$.  And $\#\cO=(2n+1)r$.  But, by induction, the characterization of $r$ in terms of divisibility just given does not depend on which orbit $\cO$ is chosen.  This proves that the orbit size is constant over all orbits.

For the divisibility result it suffices to show that $\#\cL_{2n}(C_n)=(2n-1)!!$ since each orbit has the same size.  But, since 
$m=2n=\# C_n$, we are counting natural labelings. So, by formula~\eqref{hook} applied to $T=C_n$, we have
$$
\#\cL_{2n}(C_n) = \frac{(2n)!}{(2n)(2n-2)\cdots 2\cdot 1^n} = (2n-1)!!
$$
as desired.
\end{proof}

We have just characterized the orbit sizes for $\pro_K$ acting on $\cL_m(C_n)$ where, by Proposition~\ref{ht}, $m$ is as large as possible.  We can do the same when $m$ is at the opposite end of the spectrum.

\begin{prop}
\label{C:n+1Z:n+2}
We have the following,
\ben
\item[(a)]  Every orbit of $\pro_K$ on $\cL_{n+1}(C_n)$ has size $\lcm(1,2,\ldots,n)$.
\item[(b)]  Every orbit of $\pro_K$ on $\cL_{n+2}(Z_n)$ has size $\lcm(1,2,\ldots,n)$.
\een
\end{prop}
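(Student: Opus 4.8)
The plan is to treat part (a) by a direct analysis of the $K$-promotion algorithm and then obtain part (b) essentially for free from Proposition~\ref{oplusProp}. For (a), the key point is that $m=n+1=h(C_n)+1$ is the \emph{smallest} admissible value, so Proposition~\ref{ht} forces a great deal of rigidity. Writing the spine as $\zh=x_1\lt\cdots\lt x_n$ and letting $y_j$ denote the tooth attached to $x_j$, the chain $x_1\lt\cdots\lt x_n\lt y_n$ has exactly $m=n+1$ elements and so must carry all of the labels $1,\ldots,n+1$ in increasing order. Hence $L(x_j)=j$ for all $j$ and $L(y_n)=n+1$ in every $L\in\cL_{n+1}(C_n)$, and the only freedom lies in the teeth labels $L(y_j)\in\{j+1,\ldots,n+1\}$ for $1\le j\le n-1$.

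First I would compute one application of $\pro_K$ by tracking the single unlabeled ``hole.'' After deleting the label $1$, the hole sits at $x_1$; because the spine labels are forced to be $1,\ldots,n$, the hole simply marches straight up the spine, moving from $x_j$ to $x_{j+1}$ at the step processing value $j+1$ (and finally $x_n$ grabs $n+1$, vacating $y_n$). The tooth $y_j$ is detached, becoming a permanent hole and hence relabeled $n+1$ at the end, precisely when $x_j$ is vacant at the moment the value $L(y_j)$ is processed; since $x_j$ is vacant only during the value-$(j+1)$ step, this happens if and only if $L(y_j)=j+1$ is minimal. Every other tooth is untouched and merely has its label decremented by one at the end. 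Crucially, this detachment rule depends only on $y_j$ itself, and the upward march of the spine hole is unaffected by the teeth, so the teeth evolve \emph{independently}.

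This shows that $\pro_K$ restricted to the $j$-th tooth sends $L(y_j)=j+1\mapsto n+1$ and $L(y_j)=v\mapsto v-1$ for $v>j+1$, which is a single cyclic rotation of the $n+1-j$ values $\{j+1,\ldots,n+1\}$. Thus $\pro_K$ on $\cL_{n+1}(C_n)$ is a direct product of independent full cycles of lengths $n,n-1,\ldots,1$ (the $j=n$ tooth being trivial). For such a product, the orbit of any point has size equal to the least common multiple of the individual cycle lengths, regardless of the starting configuration, since a single $\ell$-cycle returns a given coordinate to its start only after a multiple of $\ell$ steps. Therefore every orbit has size $\lcm(n,n-1,\ldots,1)=\lcm(1,2,\ldots,n)$, proving (a).

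For (b), I would invoke Proposition~\ref{oplusProp} with $P=Q=C_n$, noting $h(C_n)=n$ and $m=h+1=n+1$, so that $Z_n=C_n\bdu C_n$ has $o_{n+2}(Z_n)=\{\{\lcm(c_i,d_j)\}\}$ with the $c_i,d_j$ ranging over orbit sizes of $\pro_K$ on $\cL_{n+1}(C_n)$. By part (a) every such $c_i$ and $d_j$ equals $\lcm(1,\ldots,n)$, so every $\lcm(c_i,d_j)=\lcm(1,\ldots,n)$ as well. The main obstacle is entirely in part (a): one must verify carefully that the spine hole's trajectory is independent of the teeth and that each tooth is reset exactly at its minimal label, so that the action genuinely decouples into independent full cycles. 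Once that decoupling is established, both the uniform orbit size in (a) and the reduction in (b) follow immediately.
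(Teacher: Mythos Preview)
Your proof is correct and follows essentially the same approach as the paper's. Both arguments exploit that $m=n+1$ forces the spine together with the top tooth to carry the labels $1,\ldots,n+1$, so that $\pro_K$ acts on each remaining tooth $y_j$ as an independent cyclic decrement through $\{j+1,\ldots,n+1\}$, yielding orbit size $\lcm(1,\ldots,n)$; and both derive (b) from (a) via Proposition~\ref{oplusProp}. Your write-up is more explicit in justifying the independence of the teeth and the trajectory of the spine hole, which the paper simply asserts.
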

\begin{proof}
(a)
    In $C_n$, consider the chain consisting of the spine of the comb and the topmost maximal element. This chain has $n+1=m$ elements, so it is forced to have labels $1,2,\ldots,m$. Let the labels on the $n$ maximal elements of the comb be  $t_1,t_2,\ldots,t_n$ listed from bottom to top. 
    By the increasing condition we have $i+1\le t_i\le m$ for all $i\in[n]$.
    Thus the action of $\pro_K$ independently decrements each label $t_i$ cyclically within $\{i+1,i+2,\ldots,m\}$. It follows that the order of $\pro_K$ acting on the $i$th maximal  element is 
$m-i$. Hence, the size of any $K$-promotion orbit on an $m$-packed labeling of $C_n$ is
    \[\lcm(m-1,m-2,\ldots,m-n)=\lcm(1,2,\ldots,n).\]

\medskip

(b)  This follows immediately from part (a), the fact that $Z_n=C_n\bdu C_n$, and Proposition~\ref{oplusProp}.
\end{proof}

\section{A tree with three leaves}
\label{tlt}

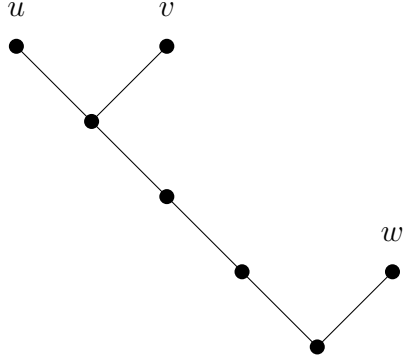
\begin{figure}
\hspace{130pt}
\begin{tikzpicture}
\fill(4,2) circle(.1);
\fill(3,3) circle(.1);
\fill(5,3) circle(.1);
\fill(2,4) circle(.1);
\fill(1,5) circle(.1);
\fill(0,6) circle(.1);
\fill(2,6) circle(.1); 
\draw (4,2)--(0,6) (1,5)--(2,6) (4,2)--(5,3);
\draw(0,6.5) node{$u$};
\draw(2,6.5) node{$v$};
\draw(5,3.5) node{$w$};
 \end{tikzpicture}
    \caption{The tree $T(3)$}
    \label{3Tree}
\end{figure}

A non-$\zh$ maximal element in a rooted tree will be called a {\em leaf}.  If a tree has  exactly  one leaf then  it is a chain.  And if it has exactly two then it is an extended star with two branches and  a trunk (which may be empty).  In the first case, the orbit structure of $\pro_K$ on $m$-packed chains is trivial.  And the second case can be analyzed with the help of Proposition~\ref{tru:pro} and Theorem~\ref{AllSta}.  We now consider a particular tree having three leaves.

Let $C$ be a chain of length $c$.  We now form a tree $T(c)$ by adding two leaves $u$ and $v$ covering the $\oh$ of the chain and one leaf $w$ covering the chain's $\zh$.  The Hasse diagram of $T(3)$ is displayed in Figure~\ref{3Tree}.  We will completely describe the orbits of $\pro_K$ on $\cL_m(T(c))$ for three values of $m$. 

\begin{thm}
\label{TltThm}
    Consider $T(c)$ having  $n:=\#T(c)= c+4$ elements. 
Then an $m$-packed labeling of $T(c)$ exists if and only if $n-2\le m \le n$.
In each of these cases, the orbits of $\pro_K$  on $\cL_m(T(c))$ can be described as follows:
    \begin{enumerate}[label=\textup{(\alph*)}]
        \item For $m=n-2$, there is one orbit of length $m-1$.
        \item For $m=n-1$:
            \begin{enumerate}[label=\textup{(\roman*)}]
                \item If $c$ is even, there are 3 orbits of length $m-1$.
                \item If $c$ is odd, there are 2 orbits, one of length $m-1$ and one of length $2(m-1)$.
            \end{enumerate}
        \item For $m=n$:
            \begin{enumerate}[label=\textup{(\roman*)}]
                \item If $c$ is even, there are 2 orbits of length $m-1$.
                \item If $c$ is odd, there is one orbit of length $2(m-1)$.
            \end{enumerate}
    \end{enumerate}
\end{thm}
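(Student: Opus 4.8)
The plan is to handle all three values of $m$ through one mechanism built around the leaf $w$ and the fork $\{u,v\}$. Writing the spine as $\zh=z_0\lt z_1\lt\cdots\lt z_c$, so that $u,v$ cover $z_c$ and $w$ covers $\zh$, the longest chain $z_0\lt\cdots\lt z_c\lt u$ has $c+2$ elements; hence $h(T(c))=c+1$, and Proposition~\ref{ht} makes the admissible values exactly $m\in\{c+2,c+3,c+4\}=\{n-2,n-1,n\}$. Since $\{w\}$ is a branch on $k=1$ vertex, Theorem~\ref{prin} gives $(m-1)\mid o(\pro_K)$, and, more importantly, its rotation picture (Figure~\ref{CycPath}) shows that one step of $\pro_K$ moves the label $a:=L(w)$ one notch down the cycle $(2,3,\ldots,m)$, so $a$ has period exactly $m-1$. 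I would then partition $\cL_m(T(c))$ into \emph{merged} labelings, with $L(u)=L(v)$, and \emph{oriented} labelings, with $L(u)\neq L(v)$, recording for the latter the orientation $\epsilon$ telling which of $u,v$ is larger.

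Next I would parametrize and count each sector, using that $L(\zh)=1$ is forced. For $m=n-2$ the bound $L(z_c)\le m-1$ forces $L(z_i)=i+1$ and $L(u)=L(v)=m$, so every labeling is merged and there are exactly $m-1$ of them, indexed by $a\in\{2,\ldots,m\}$. For $m=n$ the map $L$ is a bijection, so $u,v$ carry distinct labels, every labeling is oriented and determined by the pair $(a,\epsilon)$, and there are $2(m-1)$ of them. For $m=n-1$ exactly one value repeats; if it is $L(u)=L(v)$ the labeling is merged, and otherwise the repeat sits on a two-element antichain $\{w,x\}$ (the only size-two antichain avoiding $w$ is $\{u,v\}$), which forces $T(c)-w$ to be naturally labeled and leaves the free data $(a,\epsilon)$. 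A short check of where the doubled value can lie then yields $m-1$ merged labelings (indexed by $a$, with $L(u)=L(v)\in\{m-1,m\}$) and $2(m-1)$ oriented ones. Thus in every case the oriented labelings are indexed by $(a,\epsilon)$ and the merged ones by $a$.

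The crux is a dynamical lemma for a single $\pro_K$ step, which I would prove by running the defining algorithm and tracking the vacancy created at $\zh$ when its label $1$ is deleted. That vacancy is absorbed immediately by $w$ if the value $2$ sits only on $w$, but if $2$ sits on $z_1$ it climbs the spine (pausing at gaps but never getting stuck, since each $z_i$ with $i<c$ has the single cover $z_{i+1}$) and enters the fork; when $2$ sits on both $z_1$ and $w$ the vacancy splits and both occur. I would show that on reaching an oriented fork the vacancy enters the smaller-labeled leaf, which after the final decrement becomes the new maximum $m$, thereby \emph{flipping} $\epsilon$, whereas on reaching a merged fork both $u,v$ are vacated together and come back equal. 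Consequently both sectors are $\pro_K$-invariant, and an oriented step flips $\epsilon$ exactly when $z_1$ carries the label $2$. Counting flips over one period of $a$ gives $n-2$ in both relevant regimes: for $m=n$ the label on $z_1$ is $2$ precisely for the $n-2$ values $a\ge3$, while for $m=n-1$ one always has $L(z_1)=2$, so all $m-1=n-2$ steps flip. This lemma, with its bookkeeping of the $K$-theoretic splitting and merging of the vacancy, is the step I expect to be the main obstacle.

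Finally I would assemble the orbits. When nonempty, the merged sector is a single orbit of length $m-1$, since $a$ runs through all of $\{2,\ldots,m\}$ with each value occurring once. In the oriented sector, composing the period-$(m-1)$ rotation of $a$ with $n-2$ flips shows that $\epsilon$ returns to its start after one period iff $n-2$, equivalently $c$, is even; so for $c$ even the $2(m-1)$ states break into $2$ orbits of length $m-1$, while for $c$ odd they form one orbit of length $2(m-1)$. Reading this off in the three regimes gives the theorem: for $m=n-2$ only the merged sector survives, so there is one orbit of length $m-1$; for $m=n-1$ both sectors occur, giving three orbits of length $m-1$ when $c$ is even and one merged orbit of length $m-1$ together with one oriented orbit of length $2(m-1)$ when $c$ is odd; and for $m=n$ only the oriented sector survives, giving two orbits of length $m-1$ when $c$ is even and one orbit of length $2(m-1)$ when $c$ is odd.
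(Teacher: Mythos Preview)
Your argument is correct, and its underlying decomposition coincides with the paper's: your merged sector is the paper's orbit $\cO_3$, and your oriented sector is the paper's $\cO_1\cup\cO_2$. The difference lies in the presentation. The paper writes out explicit formulas for every labeling $L_j,M_j,N_j$ in each orbit for each value of $m$ separately and leaves the verifications (that these are orbits, and that they exhaust $\cL_m(T(c))$) to the reader; the role of the parity of $c$ is stated but not explained beyond the phrase ``because of the change in parity.'' Your version replaces that case-by-case enumeration with a single mechanism: parametrize by $(a,\epsilon)$, show both sectors are $\pro_K$-invariant, and count how many times $\epsilon$ flips in one full rotation of $a$ (namely $n-2$ times in both oriented regimes), which immediately yields the parity dependence. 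This is more conceptual and handles all three values of $m$ uniformly, at the cost of the explicit orbit listings. The one place to be careful in writing it up is the case analysis for the vacancy at $\zh$ when $a=2$: for $m=n$ the vacancy goes only to $w$ (so no flip), for $m=n-1$ oriented it splits (so a flip still occurs via the spine), and for $m=n-1$ merged with $a=2$ it goes only to $w$ (so $u,v$ merely decrement and remain equal). You mention the splitting, but make sure each of these subcases is spelled out.
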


\begin{figure}

\vspace*{-5pt}

$$
\cO_1 = 
\left(
\barr{cccc}

\begin{tikzpicture}[scale=.5]
\fill(3,0) circle(.1);
\fill(2,1) circle(.1);
\fill(1,2) circle(.1);
\fill(0,3) circle(.1);
\fill(4,1) circle(.1);
\fill(2,3) circle(.1);
\draw(3,-.5) node{$1$};
\draw(1.5,1) node{$2$};
\draw(.5,2) node{$3$};
\draw(0,3.5) node{$5$};
\draw(2,3.5) node{$4$};
\draw(4,1.5) node{$5$};
\draw (4,1)--(3,0)--(0,3) (1,2)--(2,3);
\draw (5,0) node{,};
\end{tikzpicture}
&
\begin{tikzpicture}[scale=.5]
\fill(3,0) circle(.1);
\fill(2,1) circle(.1);
\fill(1,2) circle(.1);
\fill(0,3) circle(.1);
\fill(4,1) circle(.1);
\fill(2,3) circle(.1);
\draw(3,-.5) node{$1$};
\draw(1.5,1) node{$2$};
\draw(.5,2) node{$3$};
\draw(0,3.5) node{$4$};
\draw(2,3.5) node{$5$};
\draw(4,1.5) node{$4$};
\draw (4,1)--(3,0)--(0,3) (1,2)--(2,3);
\draw (5,0) node{,};
\end{tikzpicture}
&
\begin{tikzpicture}[scale=.5]
\fill(3,0) circle(.1);
\fill(2,1) circle(.1);
\fill(1,2) circle(.1);
\fill(0,3) circle(.1);
\fill(4,1) circle(.1);
\fill(2,3) circle(.1);
\draw(3,-.5) node{$1$};
\draw(1.5,1) node{$2$};
\draw(.5,2) node{$3$};
\draw(0,3.5) node{$5$};
\draw(2,3.5) node{$4$};
\draw(4,1.5) node{$3$};
\draw (4,1)--(3,0)--(0,3) (1,2)--(2,3);
\draw (5,0) node{,};
\end{tikzpicture}
&
\begin{tikzpicture}[scale=.5]
\fill(3,0) circle(.1);
\fill(2,1) circle(.1);
\fill(1,2) circle(.1);
\fill(0,3) circle(.1);
\fill(4,1) circle(.1);
\fill(2,3) circle(.1);
\draw(3,-.5) node{$1$};
\draw(1.5,1) node{$2$};
\draw(.5,2) node{$3$};
\draw(0,3.5) node{$4$};
\draw(2,3.5) node{$5$};
\draw(4,1.5) node{$2$};
\draw (4,1)--(3,0)--(0,3) (1,2)--(2,3);
\end{tikzpicture}

\earr
\right)
$$

\vspace*{-20pt}

$$
\cO_2 = 
\left(
\barr{cccc}

\begin{tikzpicture}[scale=.5]
\fill(3,0) circle(.1);
\fill(2,1) circle(.1);
\fill(1,2) circle(.1);
\fill(0,3) circle(.1);
\fill(4,1) circle(.1);
\fill(2,3) circle(.1);
\draw(3,-.5) node{$1$};
\draw(1.5,1) node{$2$};
\draw(.5,2) node{$3$};
\draw(0,3.5) node{$4$};
\draw(2,3.5) node{$5$};
\draw(4,1.5) node{$5$};
\draw (4,1)--(3,0)--(0,3) (1,2)--(2,3);
\draw (5,0) node{,};
\end{tikzpicture}
&
\begin{tikzpicture}[scale=.5]
\fill(3,0) circle(.1);
\fill(2,1) circle(.1);
\fill(1,2) circle(.1);
\fill(0,3) circle(.1);
\fill(4,1) circle(.1);
\fill(2,3) circle(.1);
\draw(3,-.5) node{$1$};
\draw(1.5,1) node{$2$};
\draw(.5,2) node{$3$};
\draw(0,3.5) node{$5$};
\draw(2,3.5) node{$4$};
\draw(4,1.5) node{$4$};
\draw (4,1)--(3,0)--(0,3) (1,2)--(2,3);
\draw (5,0) node{,};
\end{tikzpicture}
&
\begin{tikzpicture}[scale=.5]
\fill(3,0) circle(.1);
\fill(2,1) circle(.1);
\fill(1,2) circle(.1);
\fill(0,3) circle(.1);
\fill(4,1) circle(.1);
\fill(2,3) circle(.1);
\draw(3,-.5) node{$1$};
\draw(1.5,1) node{$2$};
\draw(.5,2) node{$3$};
\draw(0,3.5) node{$4$};
\draw(2,3.5) node{$5$};
\draw(4,1.5) node{$3$};
\draw (4,1)--(3,0)--(0,3) (1,2)--(2,3);
\draw (5,0) node{,};
\end{tikzpicture}
&
\begin{tikzpicture}[scale=.5]
\fill(3,0) circle(.1);
\fill(2,1) circle(.1);
\fill(1,2) circle(.1);
\fill(0,3) circle(.1);
\fill(4,1) circle(.1);
\fill(2,3) circle(.1);
\draw(3,-.5) node{$1$};
\draw(1.5,1) node{$2$};
\draw(.5,2) node{$3$};
\draw(0,3.5) node{$5$};
\draw(2,3.5) node{$4$};
\draw(4,1.5) node{$2$};
\draw (4,1)--(3,0)--(0,3) (1,2)--(2,3);
\end{tikzpicture}

\earr
\right)
$$

\vspace*{-20pt}

$$
\cO_3 = 
\left(
\barr{cccc}

\begin{tikzpicture}[scale=.5]
\fill(3,0) circle(.1);
\fill(2,1) circle(.1);
\fill(1,2) circle(.1);
\fill(0,3) circle(.1);
\fill(4,1) circle(.1);
\fill(2,3) circle(.1);
\draw(3,-.5) node{$1$};
\draw(1.5,1) node{$2$};
\draw(.5,2) node{$3$};
\draw(0,3.5) node{$4$};
\draw(2,3.5) node{$4$};
\draw(4,1.5) node{$5$};
\draw (4,1)--(3,0)--(0,3) (1,2)--(2,3);
\draw (5,0) node{,};
\end{tikzpicture}
&
\begin{tikzpicture}[scale=.5]
\fill(3,0) circle(.1);
\fill(2,1) circle(.1);
\fill(1,2) circle(.1);
\fill(0,3) circle(.1);
\fill(4,1) circle(.1);
\fill(2,3) circle(.1);
\draw(3,-.5) node{$1$};
\draw(1.5,1) node{$2$};
\draw(.5,2) node{$3$};
\draw(0,3.5) node{$5$};
\draw(2,3.5) node{$5$};
\draw(4,1.5) node{$4$};
\draw (4,1)--(3,0)--(0,3) (1,2)--(2,3);
\draw (5,0) node{,};
\end{tikzpicture}
&
\begin{tikzpicture}[scale=.5]
\fill(3,0) circle(.1);
\fill(2,1) circle(.1);
\fill(1,2) circle(.1);
\fill(0,3) circle(.1);
\fill(4,1) circle(.1);
\fill(2,3) circle(.1);
\draw(3,-.5) node{$1$};
\draw(1.5,1) node{$2$};
\draw(.5,2) node{$4$};
\draw(0,3.5) node{$5$};
\draw(2,3.5) node{$5$};
\draw(4,1.5) node{$3$};
\draw (4,1)--(3,0)--(0,3) (1,2)--(2,3);
\draw (5,0) node{,};
\end{tikzpicture}
&
\begin{tikzpicture}[scale=.5]
\fill(3,0) circle(.1);
\fill(2,1) circle(.1);
\fill(1,2) circle(.1);
\fill(0,3) circle(.1);
\fill(4,1) circle(.1);
\fill(2,3) circle(.1);
\draw(3,-.5) node{$1$};
\draw(1.5,1) node{$3$};
\draw(.5,2) node{$4$};
\draw(0,3.5) node{$5$};
\draw(2,3.5) node{$5$};
\draw(4,1.5) node{$2$};
\draw (4,1)--(3,0)--(0,3) (1,2)--(2,3);
\end{tikzpicture}

\earr
\right)
$$
    \caption{The orbits for $T(2)$ when $m=5$}
    \label{bi}
\end{figure}
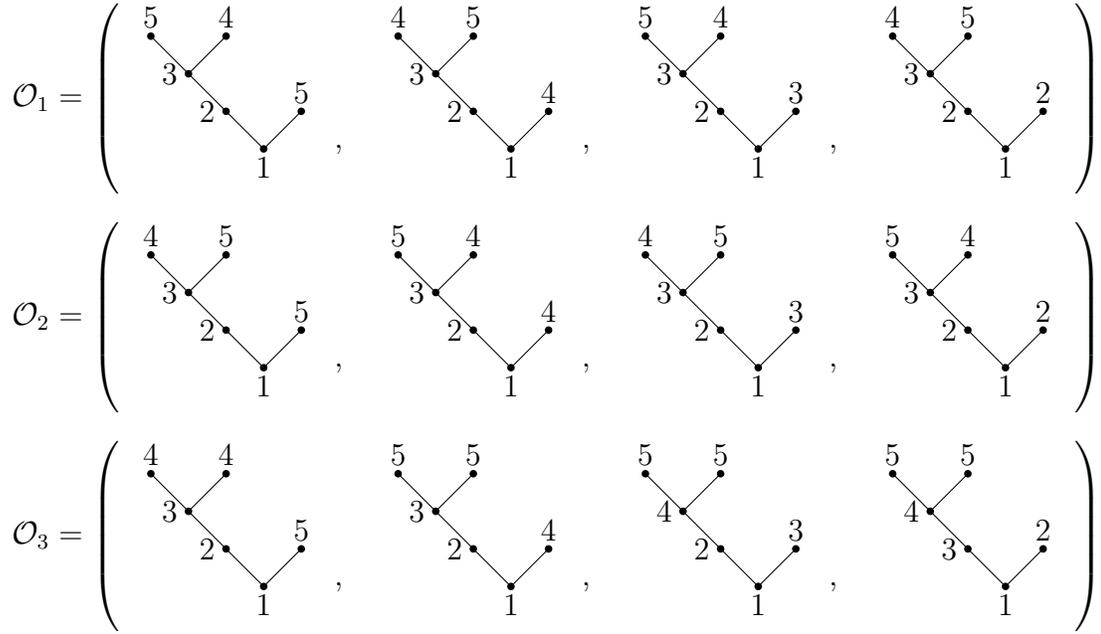

\begin{proof}
(a)  We will keep the leaf notation as in Figure~\ref{3Tree}.  
Since $m=n-2$ and there are $n-2$ elements on the path from $\zh$ to $u$, this path must be labeled $1,2,\ldots,m$.  The same is true of the path from $\zh$ to $v$.  So only the label of $w$ can vary when we apply $\pro_K$.  Since $w$ is a branch, the proof of Theorem~\ref{prin} (b) shows that its label cycles through the interval $[2,m]$.  These are clearly are all possible $m$-packed labelings, finishing the proof.

\medskip

(b)  Continuing to use the conventions in (a), we also let $x_1, x_2,\ldots x_c$ be the elements of the chain used to construct $T(c)$ from bottom to top, starting with the element covering $\zh$.  Again, we will explicitly construct the orbits in question.  An example of this construction will be found in Figure~\ref{bi}.

In case (i) we will show that there are three orbits
\begin{align}
\label{orb1}
\cO_1 &=(L_1,L_2,\ldots,L_{m-1}),\\
\label{orb2}
\cO_2  &=(M_1,M_2,\ldots,M_{m-1}),\\
\nonumber
\cO_3  &=(N_1,N_2,\ldots,N_{m-1}).
\end{align}

We will use $L$ for a generic labeling which may be in any of the orbits.  In all orbits $L(\zh)=1$ so only the other labels need to be specified.  In $\cO_1$ all the labelings have
\beq
\label{l(x_i)}
L(x_i) = i+1
\eeq
for $i\in [c]$. Furthermore, in $L_j$ for $j\in[m-1]$ we have
\beq
\label{l(w)}
L_{j}(w) = m - j + 1.
\eeq
As for the other two maximal elements, for $j\in[m-1]$ we have
\beq
\label{l(u)}
L_j(u) = \case{m}{if $j$ is odd,}{m-1}{if $j$ is even,}
\eeq
and
\beq
\label{l(v)}
L_j(v) = \case{m-1}{if $j$ is odd,}{m}{if $j$ is even.}
\eeq
It is easy to verify that the labelings just described form an orbit under $\pro_K$.

In $\cO_2$ we have that~\eqref{l(x_i)} and~\eqref{l(w)} are still satisfied.  But~\eqref{l(u)} and~\eqref{l(v)}are replaced by
$$
M_j(u) = \case{m-1}{if $j$ is odd,}{m}{if $j$ is even,}
$$
and
$$
M_j(v) = \case{m}{if $j$ is odd,}{m-1}{if $j$ is even.}
$$
Again, we leave the verification that the $M_j$ form an orbit to the reader.

Finally,  $N_1\in\cO_3$ satisfies~\eqref{l(x_i)}.   And~\eqref{l(w)} holds for all $N_j\in\cO_3$.   But
$$
N_1(u)=N_1(v) = m-1.
$$
All the other $N_j$ have
$$
N_j(u)=N_j(v) = m.
$$
Finally, for $N_j$ with $j\ge 2$ we label the chain $x_1,\ldots,x_c$ using the elements in the set difference $[2,m-1]-\{m-j+1\}$.  It is not difficult, if tedious, to use these explicit expressions to show that each $N_j$ will be an $m$-packed labeling and that $\cO_3$ is an orbit of $\pro_K$.  We also omit the details showing that these orbits contain all the $m$-packed labelings of $T(c)$ so that we have completely described the orbit structure.

In case (ii) we have the same labelings.  The only difference is that, because of the change in parity, orbits $\cO_1$ and $\cO_2$ merge into a single orbit
\beq
\label{merge}
\cO = (L_1,L_2,\ldots,L_{m-1},M_1,M_2,\ldots,M_{m-1})
\eeq

\medskip

(c)  Again, we will content ourselves with a description of the orbits.  Assume first that we are in case (i).  We want to describe orbits $\cO_1$ and $\cO_2$, keeping the notation of~\eqref{orb1} and~\eqref{orb2}.

In $\cO_1=(L_1,L_2,\ldots,L_{m-1})$,  leaf $w$ still has labels given by~\eqref{l(w)}.  For $j\le 2$ we have that~\eqref{l(x_i)} also holds.  Furthermore, we have
$$
L_1(u) = m-1 \text{ and } L_1(v)=m-2
$$
while
$$
L_2(u) =m-2 \text{ and }  L_2(v) = m.
$$
 Finally, for $j\ge3$, we have that $L_j$ satisfies~\eqref{l(u)} and~\eqref{l(v)}.  And the chain $x_1,\ldots,x_b$ takes the elements of
$[2,m-2]-\{n-i+1\}$ as its labels.

In $\cO_2=(M_1,M_2,\ldots,M_{m-1})$, the labels of $M_j$ are the same as those of $L_j$ except that the labels of $u$ and $v$ are reversed.  And when the parity of $c$ becomes odd in case (ii), the two orbits merge into one exactly as in~\eqref{merge}.
\end{proof}

\section{Questions and future directions}

We collect some  directions for future research on $m$-packed labelings of posets.

\subsection{More on combs and zippers}

\begin{table}
$$
\barr{c|r|l|r}
C_n & m &o_m(C_n)                                               & o(\pro_K)\\
\hline
\hline
C_3 & 4 & 6^1                                                   & 6\\
    & 5 & 8^1, 12^1                                             & 24\\
    & 6 & 15^1                                                  & 15\\
\hline
C_4 & 5 & 12^2                                                  & 12\\
    & 6 & 15^2, 40^1, 60^1                                      & 60\\
    & 7 & 30^3, 48^1, 72^1                                      &720\\
    & 8 & 35^3                                                  & 35\\
\hline
C_5 & 6 & 60^2                                                  & 60\\
    & 7 & 30^6, 60^6, 72^2, 120^2                               & 360\\
    & 8 & 35^6, 70^6, 140^2, 210^3, 336^1, 504^1                & 5040\\
    & 9 & 240^3, 280^3, 384^1, 576^1                            & 40320\\
    & 10& 315^3                                                 & 315\\
\hline
C_6 & 7 & 60^{12}                                               & 60\\
    & 8 & 70^{12}, 210^6, 420^6, 504^2, 840^2                   & 2520\\
    & 9 & 240^6, 280^{26}, 336^8, 480^6, 504^8, 576^2, 840^6, 960^2 & 20160\\
    & 10& 315^{26}, 378^8, 567^8, 576^9, 945^6, 720^9, 1152^3, 2520^3 & 362880\\
    & 11& 630^{15}, 640^9, 800^9, 1280^3, 2800^3                & 403200\\
    & 12& 693^{15}                                              & 693
\earr    
$$
    \caption{Comb orbit sizes and orders for $\pro_K$}
    \label{C_nTab}
\end{table}

In Theorem~\ref{C:2n} and Proposition~\ref{C:n+1Z:n+2} (a) we were able to characterize the orbit sizes of $\pro_K$ acting on the comb $\cL_m(C_n)$ when $m$ has one of the two extreme values $\#C_n=2n$ and $h(C_n)+1=n+1$.
And in Proposition~\ref{C:n+1Z:n+2} (b) we did the same for the zipper $Z_n$ when $m=h(Z_n)+1=n+2$.  Computer data suggests that other values of $m$ may yield interesting results.  In Table~\ref{C_nTab} we have listed the orbit sizes and order for $\pro_K$ acting on $C_n$ for $3\le n \le 6$.  We use the notation from  \eqref{omP} and indicate multiplicities of orbit sizes as $k^l$ if there are $l$ orbits of size $k$. Table~\ref{Z_nTab} contains similar information about zippers.

\subsection {The cyclic sieving phenomenon}

Cyclic sieving involves three objects.  Let $S$ be a set, $C$ be a cyclic group acting on $S$, and $f(q)$ be a polynomial in $q$ with coefficients in $\bbN$.  We say that the triple $(S,C,f(q))$ exhibits the {\em cyclic sieving phenomenon} (CSP) if, for all $g\in C$ we have
$$
\#S^g = f(\om)
$$
where $S^g$ is the fixed point set of $g$ and $\om$ is a root of unity chosen to have the same order as $g$.  The CSP was first defined by Reiner, Stanton, and White~\cite{RSW:csp} and has since found wide application.
See the  article of Sagan~\cite{sag:CSP} for a survey.
In Pechenik's original article~\cite{pec:csi}, he used $K$-promotion on $m$-packed labelings of a Young diagram of rectangular shape $[2]\times[n]$ to prove a CSP for these tableaux.  In fact, instances of the CSP are often connected with a form of promotion; the survey articles~\cite{rob:dac,str:dac} contain more information.

\begin{table}
$$
\barr{c|r|l|r}
Z_n & m &o_m(Z_n)                                               & o(\pro_K)\\
\hline
\hline
Z_1 & 3 & 1^1                                                   & 1\\
    & 4 & 2^2, 3^2, 6^2                                         & 6\\
    & 5 & 2^1, 4^1, 8^{12}                                      & 8\\
    & 6 & 10^{16}                                               & 10\\
    & 7 & 4^2, 12^6                                             & 12\\
\hline
Z_2 & 4 & 2^2                                                   & 2\\
    & 5 & 3^3, 6^{12}, 8^6, 24^6                                & 24\\
    & 6 & 8^8, 10^{12}, 15^{108}, 30^{36}, 40^8, 120^6          & 120\\
    & 7 & 4^2, 9^{54}, 12^6, 18^{378}, 36^{60}, 48^{80}. 144^{60} & 144\\
    & 8 & 21^{540}, 42^{30}, 56^{300}, 168^{180}                & 168\\
    & 9 & 6^3, 12^{39}, 24^{216}, 64^{588}, 192^{210}           & 192\\
    & 10& 72^{560}, 2216^{84}                                   & 216\\
    & 11& 16^8, 80^{200}                                        & 80
\earr    
$$
    \caption{Zipper orbit sizes and orders for $\pro_K$}
    \label{Z_nTab}
\end{table}

One could ask whether $\pro_K$ acting on $m$-packed labelings of rooted trees $T$ has an associated CSP.  For this, one would need a polynomial and, in the case $m=\#T$, a natural candidate is the $q$-analogue of formula~\eqref{hook}.    
To turn this quantity  into  a polynomial in $q$ we use the standard $q$-analogue of $n\in\bbN$ which is
$$
[n]_q = 1 + q  + q^2 +\cdots + q^{n-1}.
$$
Now define
\beq
\label{f^T(q)}
f^T(q) = \frac{[n]_q!}{\prod_{x\in T} [h_x]_q}
\eeq
where $[n]_q! = [1]_q [2]_q\cdots[n]_q$.
This turns out to be a polynomial in $q$ and is, up to a power of $q$, the generating function for various statistics on trees as shown by Bj\"orner and Wachs~\cite{BW:qlf}.

Unfortunately, \eqref{f^T(q)} can not be the polynomial for a CSP on natural labelings of an arbitrary rooted tree.  As an example, take the comb $C_3$.  By~\eqref{hook}, there are $15$ standard labelings of $C_3$.  And it is easy to show that there is only one orbit.  It follows that a single application of $\pro_K$ has no fixed points.  On the other hand, $f^{C_3}(q) = [5]_q [3]_q$ by~\eqref{f^T(q)}.  Substituting a primitive $15$th root of unity into this polynomial does not give zero.

This raises a number of questions.  Can one characterize the rooted trees for which~\eqref{f^T(q)} does give a CSP?  Is there another $q$-analogue of~\eqref{hook} which will yield a CSP for all standard labelings of trees?  Is there a CSP for $m$-packed labelings of trees when $m<\#T$?

\subsection{Other posets}

In the present work, we have considered the orbit structure of $\pro_K$ acting on $m$-packed labelings of rooted trees.  But other posets may also yield interesting results.  As mentioned in the introduction, the generalization of $\pro_K$ to increasing labelings has received some attention.  And the orbits on $m$-packed labelings are a subset of those on increasing ones.  So, restricting theorems for increasing labelings of $P$ may yield information about the action on $\cL_m(P)$.  For example, one could restrict the action of $\pro_K$ on increasing labelings of a product of two chains (equivalently, increasing Young tableaux of rectangular shape) to the $m$-packed case.

Often posets where rowmotion has a nice orbit structure also behave well with respect to some form of promotion.  In fact, it was the results on rowmotion on rooted trees in~\cite{DKLLSS:rrt} which partially motivated our work.  Another family of posets with interesting rowmotion  structure is fences.  
A {\em fence}, $F$, is a poset on the elements $x_1, x_2, \ldots, x_n$ whose cover relations are
$$
x_1\lt x_2 \lt \ldots \lt x_k \gt x_{k+1} \gt \ldots\gt  x_l \lt x_{l+1} \ldots
$$
for certain $k,l,\ldots\in[n]$.  Fences appear in the theory of cluster algebras as well as the work of Morier-Genoud and Ovsienko~\cite{MGO:qrq} on $q$-analogues of rational numbers.  In~\cite{EPRS:rf}, Elizalde et al.\ have derived a number of  results about rowmotion for fences.  It would be interesting to study the action of $\pro_K$ in this context.

\subsection{Homomesy and homometry}

We keep the notation in Section~\ref{es} concerning homomesy.  There is a weaker condition that is also displayed by some group actions.  We say that the statistic $\st:S\ra\bbN$ is {\em homometric} if, for any two orbits $\cO_1$ and $\cO_2$ for the action of $G$,
$$
\#\cO_1 = \#\cO_2 \implies \st \cO_1 = \st \cO_2.
$$
Note that homomesy implies homometry.  Indeed, if equation~\eqref{HomEq} is satisfied and $\#\cO_1=\#\cO_2$ then
$$
\st \cO_1 = c\cdot\#\cO_1 =  c\cdot\#\cO_2   = \st \cO_2.
$$
But there are statistics which are homometric but not homomesic.  
Homometry was first defined and studied in~\cite{EPRS:rf}.

Just as we used Theorem~\ref{ProRhoEq} to lift a homomesic result about rowmotion from~\cite{DKLLSS:rrt} to $\pro_K$ in Corollary~\ref{bSta}, one could do the same thing for homometries.  It would also be interesting to go the other way and find natural homomesies and homometries in the context of $K$-promotion and then see what they implied about rowmotion.  To apply Theorem~\ref{ProRhoEq}, the poset $P$ must have all chains of the same length and that is true for a number of interesting posets such as the product of chains or certain fences.  To use this theorem, one also needs to have $m=h(P)+2$, but there could well be interesting homomesies and homometries for other values of $m$.

\medskip
\textit{Acknowledgments.}
The authors would like to thank Rebecca Patrias for inspiring the topic of this paper.  We also would like to thank Jessica Striker and Oliver Pechenik for reading a preliminary version of this paper and giving us many comments which significantly improved it.  Finally, we are grateful to the referee who made suggestions for improvement of the exposition and caught an error in one of the theorems.

\nocite{*}
\bibliographystyle{alpha}

\newcommand{\etalchar}[1]{$^{#1}$}

\end{document}